\documentclass[pdflatex,sn-mathphys-ay]{sn-jnl}

\usepackage[utf8]{inputenc}
\usepackage[T1]{fontenc}
\usepackage{graphicx}%
\usepackage{multirow}%
\usepackage{amsmath,amssymb,amsfonts,bm}%
\usepackage{amsthm}%
\usepackage{mathrsfs}%
\usepackage[title]{appendix}%
\usepackage[svgnames]{xcolor}%
\usepackage{textcomp}%
\usepackage{manyfoot}%
\usepackage{booktabs}%
\usepackage{algorithm}%
\usepackage{algorithmicx}%
\usepackage{algpseudocode}%
\usepackage{listings}%
\usepackage{enumerate}

\usepackage{hyperref}
\hypersetup{
  colorlinks,
  citecolor=NavyBlue,
  urlcolor=NavyBlue,
  linkcolor=NavyBlue}

\theoremstyle{theorem}
\newtheorem{theorem}{Theorem}
\newtheorem{proposition}{Proposition}%
\newtheorem{lemma}{Lemma}%
\newtheorem{openproblem}{Open Problem}

\theoremstyle{definition}
\newtheorem{definition}{Definition}%
\newtheorem{assumption}{Assumption}%
\theoremstyle{remark}
\newtheorem{remark}{Remark}%

\usepackage[nameinlink]{cleveref}
\Crefformat{equation}{Eq.~(#2#1#3)}
\crefmultiformat{equation}
  {Eqs.~(#2#1#3)}          
  { and~(#2#1#3)}         
  {, (#2#1#3)}            
  { and~(#2#1#3)}         

\crefname{theorem}{theorem}{theorems}
\Crefname{theorem}{Theorem}{Theorems}

\crefname{proposition}{proposition}{propositions}
\Crefname{proposition}{Proposition}{Propositions}

\crefname{openproblem}{open problem}{open problems}
\Crefname{openproblem}{Open Problem}{Open Problems}

\crefname{assumption}{assumption}{assumptions}
\Crefname{assumption}{Assumption}{Assumptions}

\usepackage{dsfont}
\newcommand \one    {\mathds{1}}
\let        \un     \one

\newcommand \Dset  {\mathbb{D}}
\newcommand \Hset  {\mathbb{H}}
\newcommand \Mset  {\mathbb{M}}
\newcommand \Nset  {\mathbb{N}}
\newcommand \Rset  {\mathbb{R}}
\newcommand \Rplus {\Rset_+}
\newcommand \Rbar  {\overline{\Rset}}
\newcommand \Rpbar {\overline{\Rset}_+}

\newcommand \Pset  {\mathbb{P}}

\newcommand \Tset  {\mathbb{T}}
\newcommand \Xset  {\mathbb{X}}
\newcommand \Yset  {\mathbb{Y}}

\newcommand \Acal  {\mathcal{A}}  
\newcommand \Bcal  {\mathcal{B}}  
\newcommand \Fcal  {\mathcal{F}}  
\newcommand \Pcal  {\mathcal{P}}  
\newcommand \Tcal  {\mathcal{T}}  
\newcommand \Ycal  {\mathcal{Y}}  
\newcommand \Xcal  {\mathcal{X}}  

\newcommand \ddiff    {\mathrm{d}}
\newcommand \dx       {\ddiff x}
\newcommand \dy       {\ddiff y}
\newcommand \ds       {\ddiff s}
\newcommand \dt       {\ddiff t}

\newcommand \dnu      {\ddiff \nu}
\newcommand \dnub     {\ddiff \nub}
\newcommand \dnug     {\ddiff \nug}
\newcommand \dlambda  {\ddiff \lambda}
\newcommand \domega   {\ddiff \omega}

\newcommand \nug      {\boldsymbol{\nu}}
\newcommand \nub      {\overline{\nug}}

\newcommand \Ymsp {\left( \Yset, \Ycal \right)}

\newcommand{\prt}[1]{\left(#1\right)} 

\DeclareMathOperator{\vect}{span}  
\DeclareMathOperator{\dom}{dom}    
\DeclareMathOperator{\var}{var}    
\DeclareMathOperator{\ri}{ri}      

\newcommand \Esp  {\mathsf{E}}
\newcommand \Prob {\mathsf{P}}

\begin{document}

\title[Article Title]{%
  Uncertainty functionals revisited:\\%
  Concavity and Jensen's inequality}

\author*[1]{\fnm{Julien} \sur{Bect}}\email{julien.bect@centralesupelec.fr}

\author[1]{\fnm{Xujia} \sur{Zhu}}\email{xujia.zhu@centralesupelec.fr}

\affil*[1]{\orgdiv{Laboratoire des Signaux et Systèmes},
  \orgname{Université Paris-Saclay, CNRS, CentraleSupélec},
  \orgaddress{\street{3, rue Joliot Curie}, \city{Gif-sur-Yvette},
    \postcode{91190}, \country{France}}}

\abstract{%
  This article presents a theoretical study of uncertainty functionals
  on general measurable spaces. %
  These functionals are fundamental in experimental design and global
  sensitivity analysis, where they are used to quantify variability
  and information content in probabilistic models. %
  As first articulated in DeGroot's seminal 1962 article, a natural
  requirement is that uncertainty should decrease on average when
  additional information is obtained. %
  This requirement is equivalent to the probabilistic
  form of Jensen's inequality on the space of probability measures. %
  Our main results show that concavity is necessary but not sufficient
  for Jensen's inequality to hold whenever the underlying measurable
  space is infinite. %
  We also provide practicable sufficient conditions under which the
  desired property holds. %
  These results contribute to a clearer mathematical foundation for
  uncertainty quantification. %
  Several open questions are formulated.%
}

\keywords{Uncertainty functionals, Decreasing on average, Concavity, Jensen's inequality}

\maketitle

\section{Introduction}\label{sec1}

The story of uncertainty functionals originates in the late 1950s in
the field of Bayesian experimental design, when \cite{lindley:1956:information,
  lindley:1957:binomial} proposed measuring the information provided by an
experiment about a parameter~$\bm{\theta} \in \Theta$ through the
difference
\begin{equation*}
  \Delta \;=\;
  \Esp\left( I\left( \Prob^{\bm{\theta} \mid X} \right) \right)
  - I\left( \Prob^{\bm{\theta}} \right),
\end{equation*}
where $X$ denotes the result of the experiment, and the
``information'' $I(\nu)$ carried by a probability measure~$\nu$
on~$\Theta$ is defined as the negative of the (differential) entropy
with respect to a given reference measure~$\lambda$:
\begin{equation*}
  I(\nu) \;=\; \int \frac{\dnu}{\dlambda}\, \log\left( \frac{\dnu}{\dlambda} \right)\; \dlambda,
\end{equation*}
assuming that $\nu \ll \lambda$. %
Lindley observes, among other properties, that this difference is always
non-negative---indeed, it coincides with the expected Kullback--Leibler
divergence from the posterior to the prior distribution, or
equivalently to the mutual information between~$\theta$ and~$X$. %
Assuming that $\Theta$ is finite, and $\lambda$ is the uniform
distribution on~$\Theta$, Lindley's information~$I(\nu)$ is the
negative of Shannon's entropy~$H(\nu)$, which is a non-negative measure
of uncertainty, and $\Delta$ can be read as the expected reduction of
uncertainty provided by the experiment:
\begin{equation}\label{equ:Delta-H}
  \Delta \;=\;
  H\left( \Prob^{\bm{\theta}} \right)
   - \Esp\left( H\left( \Prob^{\bm{\theta} \mid X} \right) \right).
\end{equation}
(As is well known, $H(\nu)$ can be negative when $H$ is the
differential entropy in general; nevertheless, even in this case, the
difference~$\Delta$ remains non-negative.)

\cite{degroot:1962:uncertainty} took Lindley's idea one step further,
replacing Shannon's entropy in~\eqref{equ:Delta-H} by a general
uncertainty functional, which we will continue to denote by~$H$ in
this article. %
An uncertainty functional in the sense of DeGroot\footnote{%
  \cite{degroot:1962:uncertainty} actually uses the term ``uncertainty
  function'', which is consistent with the fact that the article
  focuses on the case where~$\Theta$ is finite. %
  We will prefer the term ``uncertainty functional'' in order to
  emphasize that, in general, the space of probability measures
  on~$\Theta$ is infinite-dimensional.} %
is a (measurable) non-negative mapping defined on the set of
probability measures on~$\Theta$, which vanishes on Dirac measures and
yields a non-negative uncertainty reduction~$\Delta$ for any possible
experiment. %
Functionals that satisfy the latter property will be called
\emph{decreasing on average} as in \cite{bect:2019:supermartingale}. %
For probability measures on finite sets, apart from Shannon's entropy,
the $\alpha$-entropies of \cite{havrda:1967:alpha-entropy} and the
Gini-Simpson index \citep{simpson:1949:measurement} are examples of
uncertainty functionals. %
For probability measures on~$\Rset$, variance is a prototypical
example of such a function, which can be seen as a special case of a
more general class of uncertainty functionals called quadratic
entropies by \cite{rao:1984:convexity}. %
The reader is referred to \cite{bernardo:1979:expected,
  rao:1982:diversity, degroot:1986:concepts,
  degroot:1994:changes, ginebra:2007:information, dawid:2007:geometry,
  hainy:2014:learningfunctions} for related
discussions and examples.

Uncertainty functionals also play a role in global sensitivity
analysis, where the objective is to assess how uncertainties in a
model's inputs propagate through the model and contribute to the
variability of its output \citep[see, e.g.,][and references
therein]{Saltelli2004, DaVeiga2021}. %
Consider indeed the popular first-order Sobol' sensitivity indices
associated with a function~$f: \Rset^n \to \Rset$ and a random input
vector~$X$:
\begin{equation*}
  S_j \;=\; \frac{\var\left( \Esp\left( f(X) \mid X_j \right) \right)}{\var\left( f(X) \right)},
  \quad 1 \le j \le n,\vspace{2pt}
\end{equation*}
originally introduced by \citet{sobol:1993:sensitivity} using the
functional ANOVA decomposition of~$f$ \citep{hoeffding:1948}. %
As observed by \cite{homma-saltelli:1996:importance-measures}, these
indices admit a natural interpretation as the (normalized) expected
reduction of uncertainty~\eqref{equ:Delta-H} in the output $Y = f(X)$
that can be achieved by reducing the uncertainty on a single
input~$X_j$: %
\begin{equation}
  \label{equ:Sobol-Sj-reformul}
  S_j \;=\; \frac{\var\left( f(X) \right) - \Esp\left( \var\left( f(X) \mid X_j \right) \right)}{\var\left( f(X) \right)}
  \;=\; \frac{H\left( \Prob^{Y} \right) - \Esp\left( H\left( \Prob^{Y \mid X_j} \right) \right)}{H\left( \Prob^{Y} \right)},
\end{equation}
where $H(\nu)$ denotes the variance of a probability measure~$\nu$. %
Several authors have taken this reformulation as the starting point
for a generalization of Sobol's variance-based sensitivity analysis,
replacing the variance by a broader class of functionals~$H$
\citep{Fort2016,Borgonovo2021,Fissler2023,Straub2025}. %
The special case where $Y = f(X)$ is discrete, and $H$ is Shannon's
entropy, has been proposed by \cite{Krzykacz2001} and further
discussed by \cite{Auder2009}. %
As above, to ensure that the right-hand side
of~\eqref{equ:Sobol-Sj-reformul} defines a sensitivity index
in~$\left[ 0,\, 1 \right]$, interpretable as a fraction of uncertainty
reduction, $H$~must be non-negative and decreasing on average.

Focusing on the case where~$\Theta$ is a finite set,
\cite{degroot:1962:uncertainty} proves that $H$~is decreasing on
average if, and only if, it is concave. %
The ``only if'' part is proved by considering a suitable Bernoulli
experiment, and that part of DeGroot's argument remains valid even if
$\Theta$~is not finite. %
The ``if'' part follows from the integral---i.e., probabilistic---form
of Jensen's inequality, using the fact that the set of probability
measures on a finite set of cardinal~$n$ can be identified with the
$(n-1)$-dimensional probability simplex, which is a convex subset
of~$\Rset^n$. %
That part does not generalize directly beyond the case of finite
sets. %
Indeed, the set of probability measures becomes infinite-dimensional,
and it has been proved by \cite{perlman74} that Jensen's inequality
does not hold in general in infinite-dimensional spaces. %
The main contribution of this article is to prove that concavity is
not sufficient in general for $H$~to be decreasing on average.

The paper is organized as follows. %
\Cref{sec2} introduces the key concepts underlying uncertainty
functionals, provides their formal definitions, and illustrates these
definitions with two well-known families of functionals. %
In particular, we show that the property of being decreasing on
average is equivalent to (the probabilistic form of) Jensen's
inequality on the space of probability measures. %
\Cref{sec3} examines the relationship between concavity and Jensen's
inequality. %
Our main results, based on explicit counterexamples, show that
concavity is necessary but not sufficient for Jensen's inequality to
hold whenever the underlying measurable space is infinite. %
We also discuss a sufficient condition under which Jensen's inequality
holds. %
Finally, \Cref{sec4} summarizes the main findings of this work and
discusses directions for future research.

\section{Uncertainty functionals}\label{sec2}

\subsection{Definitions}

Let $\Ymsp$ denote a measurable space, %
and let $\Pset = \Mset_1^+\Ymsp$ denote the space of all probability
measures on~$\Ymsp$. %
The set~$\Yset$ corresponds to the output space in the sensitivity
analysis setting ($\Yset = \Rset$ in our introductory discussion) and
to the parameter space~$\Theta$ in the experimental design setting. %
We are interested in functionals defined on~$\Pset$, possibly taking
infinite values, i.e., mappings from~$\Pset$
to~$\Rbar = \left[ -\infty, +\infty \right]$. %
We call such functionals \emph{probability functionals} on~$\Ymsp$.

Let $\Fcal$ the $\sigma$-algebra on~$\Pset$ generated by the
evaluation maps~$\pi_B:\nu \mapsto \nu(B)$, $B \in \Ycal$. %
A random element~$\nug$ in~$\left( \Pset, \Fcal \right)$ is called a
random measure \citep[see, e.g.,][p.105--106]{kall02}, and we denote
by~$\nub$ its intensity measure, defined
by~$\nub(B) = \Esp\left( \nug(B) \right)$. %
Let $\left( \Omega, \Acal, \Prob \right)$ denote a probability
space. %
A function $\nug: \Omega \to \Pset$ is a random element
in~$\left( \Pset, \Fcal \right)$ if, and only if,
$(\omega, B) \mapsto \nug(\omega, B) \triangleq \left( \nug(\omega)
\right)(B)$ is a probability kernel from~$\left( \Omega, \Acal \right)$
to~$\left( \Yset, \Ycal \right)$. %
With this notation, the intensity measure~$\nub$ is given
by~$\nub(B) = \int_\Omega \Prob(\domega)\, \nug(\omega, B)$. %
In the rest of the article, a probability
functional~$H: \Pset \to \Rbar$ is called measurable if it is $\Fcal$
/ $\Bcal\left(\Rbar\right)$-measurable. %

\begin{definition}\label{def:satisfy-Jensen}
  We shall say that a measurable probability functional
  $H: \Pset \to \Rbar$ \emph{satisfies Jensen's inequality} (concave
  version) if, for all random probability measures~$\nug$ on~$\Ymsp$ such
  that $H(\nug)$ is quasi-integrable,
  \begin{equation}
    \label{equ:Jensen-ineq-measures}
    H(\nub) \ge \Esp\left( H( \nug) \right).
  \end{equation}
  (Recall that a random variable~$Z$ is called quasi-integrable if
  $\Esp\left( Z^+ \right) \wedge \Esp\left( Z^- \right) < +\infty$.)
\end{definition}

\begin{remark}
  The random variable~$H(\nug)$ is always quasi-integrable if $H$~is
  non-negative, which will be assumed for our uncertainty
  functionals %
  (see \Cref{def:UF} below).%
\end{remark}

Leveraging the correspondence \citep[see, e.g.,][Lemma~1.37]{kall02}
between random measures and probability kernels, we can also %
characterize probability functionals that satisfy Jensen's inequality
in terms of regular conditional distributions. %
Indeed, given a measurable space $\left( \Xset, \Xcal \right)$ and a
random element $Z = (X, Y)$ in
$\left( \Xset \times \Yset, \Xcal \otimes \Ycal \right)$, the regular
conditional distribution~$\Prob^{Y \mid X }$, when it exists, is a
random measure on~$\Ymsp$ defined on the probability
space~$\bigl( \Xset, \Xcal, \Prob^X \bigr)$. %
Conversely, given a random measure $\nug$ on~$\Ymsp$, defined on some
probability space $\left( \Xset, \Xcal, \pi \right)$, we can set
$\Omega = \Xset \times \Yset$, $\Acal = \Xcal \otimes \Ycal$,
$\Prob(\dx\,\dy) = \pi(\dx)\, \nug(x, \dy)$, and then view
$\nug$ as the regular conditional distribution of~$Y:(x,y) \mapsto y$
given $X:(x,y) \mapsto x$. %
The following result then stems immediately from this correspondence.%

\begin{proposition}\label{prop:equiv-Jensen-DoA}
  A measurable probability functional~$H$ satisfies Jensen's
  inequality if, and only if, it is \emph{decreasing on average}: for all
  measurable spaces~$\left( \Xset, \Xcal \right)$, for all random
  elements $Z = (X, Y)$ in
  $\left( \Xset \times \Yset, \Xcal \otimes \Ycal \right)$ such that a
  regular conditional distribution~$\Prob^{Y \mid X }$ exists and
  $H\left( \Prob^{Y \mid X } \right)$ is quasi-integrable,
  \begin{equation}\label{equ:DoA-def}
    H\left( \Prob^Y \right)
    \;\ge\;
    \Esp\left( H\left( \Prob^{Y \mid X } \right) \right).
  \end{equation}
\end{proposition}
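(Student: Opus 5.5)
The proof consists of two translations, one for each direction, built on the correspondence between random measures and probability kernels recalled just before the statement \citep[Lemma~1.37]{kall02}. The single fact that links the two formulations is this: when $\nug = \Prob^{Y \mid X}$ is a regular conditional distribution on $\bigl( \Xset, \Xcal, \Prob^X \bigr)$, its intensity measure is the marginal law of~$Y$. Indeed, for $B \in \Ycal$,
\begin{equation*}
  \nub(B) \;=\; \int_\Xset \Prob^X(\dx)\, \Prob^{Y \mid X}(x, B) \;=\; \Prob\left( X \in \Xset,\, Y \in B \right) \;=\; \Prob^Y(B),
\end{equation*}
by the defining property of a regular conditional distribution applied with the event $\{X \in \Xset\}$. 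I would record this identity, $\nub = \Prob^Y$, first.

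For ``Jensen $\Rightarrow$ decreasing on average'', take $(X, Y)$ such that $\Prob^{Y \mid X}$ exists and $H\bigl( \Prob^{Y \mid X} \bigr)$ is quasi-integrable. Because $\Prob^{Y \mid X}$ is a probability kernel from $(\Xset, \Xcal)$ to $\Ymsp$, it is a random element of $(\Pset, \Fcal)$ defined on $\bigl( \Xset, \Xcal, \Prob^X \bigr)$. Since $H$ is $\Fcal$-measurable, $H\bigl( \Prob^{Y \mid X} \bigr)$ is a genuine random variable. Applying \eqref{equ:Jensen-ineq-measures} to this random measure and using $\nub = \Prob^Y$ gives \eqref{equ:DoA-def} immediately. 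The expectation $\Esp$ in \eqref{equ:DoA-def} is understood under $\Prob^X$, which is the same as the expectation of $H\bigl( \Prob^{Y \mid X} \bigr)(X)$ on the original probability space, by transfer.

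For the converse, let $\nug$ be a random measure on some $(\Omega, \Acal, \Prob)$ with $H(\nug)$ quasi-integrable. Following the construction in the text, set $\Xset = \Omega$, $\Xcal = \Acal$, $\pi = \Prob$, and define the measure $Q(\dx\,\dy) = \pi(\dx)\, \nug(x, \dy)$ on $\Xcal \otimes \Ycal$. This is a well-defined probability measure because $(x, B) \mapsto \nug(x, B)$ is a probability kernel. Take $X(x, y) = x$ and $Y(x, y) = y$. Then $Q^X = \pi$, and for $A \in \Xcal$ and $B \in \Ycal$ we have $Q(X \in A, Y \in B) = \int_A \pi(\dx)\, \nug(x, B)$. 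Hence $\nug$ is a regular conditional distribution of $Y$ given $X$, so one exists. The quantity $H\bigl( Q^{Y \mid X} \bigr) = H(\nug)$ is quasi-integrable with the same expectation, and $Q^Y = \nub$. Applying \eqref{equ:DoA-def} then gives \eqref{equ:Jensen-ineq-measures}.

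There is no real obstacle here. The only points that need care are measure-theoretic bookkeeping:
\begin{itemize}
  \item Kernels and $\Fcal$-measurable maps must be identified. This is exactly the quoted lemma, and it is what makes $H(\nug)$ measurable.
  \item Regular conditional distributions are unique only $\Prob^X$-almost surely, so the expectation in \eqref{equ:DoA-def} must not depend on the chosen version. Two versions agree almost surely as elements of~$\Pset$, so $H$ evaluated on them agrees almost surely and the expectations coincide.
\end{itemize}
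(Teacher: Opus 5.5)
Your proof is correct and follows the paper's argument exactly: the paper likewise derives the proposition from the kernel/random-measure correspondence. It reads $\Prob^{Y\mid X}$ as a random measure on $\bigl(\Xset,\Xcal,\Prob^X\bigr)$ whose intensity is $\Prob^Y$, and conversely realizes $\nug$ as the regular conditional distribution of the second coordinate under $\pi(\dx)\,\nug(x,\dy)$.

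One correction to your second bullet: two versions of $\Prob^{Y\mid X}$ need not agree almost surely as elements of~$\Pset$ when $\Ycal$ is not countably generated. For example, take $Y=X$ uniform on $[0,1]$ with $\Ycal$ the countable--cocountable $\sigma$-algebra. Both $x\mapsto\delta_x$ and the constant kernel equal to the $\{0,1\}$-valued measure vanishing on countable sets are versions, and they differ at every~$x$. Your conclusion still holds, because an $\Fcal$-measurable $H$ depends on $\nu$ only through countably many values $\nu(B_i)$, and on each of these the two versions agree almost surely.
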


\begin{remark}
  A regular conditional distribution always exists if $\Yset$ is
  isomorphic to a Borel subset of~$\left[ 0, 1 \right]$, which
  includes in particular Polish spaces \citep[see, e.g.,][Chapters~1
  and~5]{kall02}.
\end{remark}

\begin{definition}\label{def:UF}
  We will say that a non-negative probability functional
  $H: \Pset \to \Rpbar$ is an \emph{uncertainty functional} if i) it
  is measurable, ii) satisfies Jensen's inequality (equivalently,
  decreases on average), and iii) vanishes on Dirac measures.
\end{definition}

This definition calls for a series of comments.

\begin{remark}
  Following \cite{degroot:1962:uncertainty}, we require in
  \Cref{def:UF} that $H(\delta_y) = 0$ for all $y \in \Yset$, but we
  do not require that $H(\nu) = 0$ if and only if $\nu$~is a Dirac
  measure. %
  In other words, an uncertainty functional measures the uncertainty
  about certain features of an unknown element of~$\Yset$, not
  necessarily about the element itself. %
  For instance, for any measurable space~$\Ymsp$, if
  $\xi: \Yset \to \Rset$ is a measurable function, then
  $H:\nu \mapsto \var_{Y \sim \nu}(\xi(Y))$ is an uncertainty
  functional (see \Cref{sec:regular-QUFs} for further examples in this
  vein). %
  Similarly, the null functional $H: \nu \mapsto 0$ is an uncertainty
  functional---albeit a rather uninformative one. %
  In contrast, in the related definition of a measure of diversity,
  \cite{rao:1982:diversity, rao:1984:convexity} requires that
  $H(\nu) = 0$ if and only if $\nu$ is a Dirac measure.
\end{remark}

\begin{remark}\label{rem:PH-not-convex}
  The set $\Pset_H = \{ \nu \in \Pset \mid H(\nu) < +\infty \}$ is not
  necessarily convex for an uncertainty functional in the sense of
  \Cref{def:UF}. %
  As an example where $\Pset_H$ is not convex, consider the case where
  $\Yset$~is finite or countable, and $H$~is defined by $H(\nu) = 0$
  if $\nu$ is a Dirac measure, and $H(\nu) = +\infty$ otherwise. %
  It is easily checked that $H$~is an uncertainty functional, and
  $\Pset_H$ is the set of all Dirac measures on~$\Yset$, which is not
  convex.
\end{remark}

\begin{remark}
  It is important to allow the value~$+\infty$ when dealing with
  uncertainty functionals on general measurable spaces~$\Ymsp$, if one
  wants to work with functionals defined on the
  set~$\Pset = \Mset_1^+\Ymsp$ of all probability measures
  on~$\Ymsp$. %
  \cite{degroot:1962:uncertainty} does not explicitly
  allow~$+\infty$, but this is not a severe restriction since he
  focuses on the case where~$\Yset$ is a finite set. %
  For probability measures on $\left( \Rset, \Bcal(\Rset) \right)$,
  however, the situation is different: %
  for instance, the variance functional is finite only for probability
  measures with a finite second-order moment. %
  (Alternatively, we could work with real-valued functionals defined
  on a convex subset of~$\Mset_1^+\Ymsp$, which would be slightly more
  restrictive since $\{ H < +\infty \}$ is not necessarily convex, as
  explained in \Cref{rem:PH-not-convex}.)
\end{remark}

\subsection{Some examples}\label{sec:UFexamples}

\subsubsection{\texorpdfstring{$\phi$}{phi}-entropies}
\label{sec:phi-entropies}

As a first example, consider the case where $\Yset$~is a finite or
countable set, endowed with the discrete $\sigma$-algebra
$\Ycal = \Pcal(\Yset)$. %
For a probability measure~$\nu$
on~$\left( \Yset, \Pcal(\Yset) \right)$, set
$\nu_y = \nu\left( \{ y \} \right)$ for $y \in \Yset$. %
The following proposition provides an interesting class of uncertainty functionals
in this setting.

\begin{proposition}
  Let $h: \left[ 0, 1 \right] \to \Rplus$ denote a non-negative
  concave function, such that $h(0) = h(1) = 0$. %
  Then
  \begin{equation}\label{equ:def-phi-entropy}
    H:\, \nu \;\mapsto\; \sum_{y \in \Yset} h\left( \nu_y \right)
  \end{equation}
  is an uncertainty functional on~$\left( \Yset, \Pcal(\Yset) \right)$
  and, for all random probability measures~$\nug$
  on~$\left( \Yset, \Pcal(\Yset) \right)$ such that $\Esp\left( H\left( \nug \right) \right) < +\infty$, we have the relation
  \begin{equation}
    \label{equ:Jensen-difference-phi-entropy}
    H\left( \nub \right) - \Esp\left( H\left( \nug \right) \right)
    \;=\;
    \sum_{y \in \Yset} \Bigl(
      h\left( \nub_y \right) - \Esp\left( h(\nug_y) \right)
    \Bigr), \vspace{-2mm}
  \end{equation}
  where the sum in the right-hand side is well-defined
  in~$\Rpbar$.
\end{proposition}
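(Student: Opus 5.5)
We check the three requirements of \Cref{def:UF}, then prove the identity \eqref{equ:Jensen-difference-phi-entropy}; Jensen's inequality will follow from the identity.

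\emph{Non-negativity, measurability, Dirac measures.} Non-negativity is immediate since $h \ge 0$, so $H$ takes values in $\Rpbar$ as a countable sum of non-negative terms. Each map $\nu \mapsto \nu_y = \pi_{\{y\}}(\nu)$ is $\Fcal$-measurable. A finite concave function on $[0,1]$ is continuous on $(0,1)$ and Borel measurable on $[0,1]$, since it is monotone on at most two intervals. Hence each $h(\nu_y)$ is measurable, and so is the countable sum $H$. For a Dirac measure, $\nu_y \in \{0,1\}$ for all $y$, so $H(\delta_y)=0$ because $h(0)=h(1)=0$.

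\emph{Termwise Jensen.} For each $y$, $\nug_y$ is a $[0,1]$-valued random variable with mean $\nub_y$. The scalar concave Jensen inequality on the compact interval $[0,1]$ gives $h(\nub_y) \ge \Esp(h(\nug_y))$. The right-hand side is finite because $h$ is bounded above, being concave on a compact interval and hence bounded by its value at its maximum, or more simply because it is dominated by an affine function. Thus every summand $d_y := h(\nub_y) - \Esp(h(\nug_y))$ lies in $[0,+\infty)$, and the series $\sum_y d_y$ is well defined in $\Rpbar$.

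\emph{The identity.} Assume $\Esp(H(\nug))<+\infty$. By Tonelli, since all terms are non-negative, $\Esp(H(\nug)) = \sum_y \Esp(h(\nug_y))$, and this series converges. We have $h(\nub_y) = d_y + \Esp(h(\nug_y))$ with both terms non-negative. Summing over $y$ in $\Rpbar$ gives
\[
H(\nub) = \sum_y d_y + \Esp(H(\nug)).
\]
Subtracting the finite quantity $\Esp(H(\nug))$ yields \eqref{equ:Jensen-difference-phi-entropy}, and both sides are in $\Rpbar$. In particular $H(\nub) \ge \Esp(H(\nug))$.

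If instead $\Esp(H(\nug)) = +\infty$, we need $H(\nub) = +\infty$. This also follows from the termwise bound: summing $h(\nub_y) \ge \Esp(h(\nug_y))$ over $y$ and applying Tonelli gives $H(\nub) \ge \Esp(H(\nug)) = +\infty$. So Jensen's inequality holds for every random measure, since $H(\nug)$ is always quasi-integrable, and $H$ is an uncertainty functional.

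The only step needing care is the bookkeeping with infinite sums. Subtraction is legitimate only because $\Esp(H(\nug))$ is finite, and the interchange of expectation and sum relies on non-negativity. The scalar Jensen step at the endpoints $0$ and $1$ is harmless: concavity on the closed interval still gives $h(\Esp Z) \ge \Esp h(Z)$. One way to see this is that a concave $h$ on $[0,1]$ lies above its chords and below a supporting affine majorant at any interior mean. At an endpoint mean, $Z$ is almost surely constant and the inequality is an equality.
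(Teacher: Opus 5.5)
Your proof is correct and follows essentially the same route as the paper: a termwise scalar Jensen inequality $h(\nub_y)\ge \Esp\bigl(h(\nug_y)\bigr)$, Tonelli to exchange $\Esp$ with the countable sum, then the split $h(\nub_y)=d_y+\Esp\bigl(h(\nug_y)\bigr)$ into two non-negative pieces summed in $\Rpbar$. One small slip, which does not affect the argument: a concave $h$ on $[0,1]$ need not attain its supremum (take $h(0)=0$ and $h(p)=1-p$ on $(0,1]$). Your affine-majorant remark still gives the finiteness you need, and so does the simpler bound $\Esp\bigl(h(\nug_y)\bigr)\le h(\nub_y)<+\infty$.
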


\begin{proof}
  Obviously, the probability functional~\eqref{equ:def-phi-entropy} is
  measurable and vanishes on Dirac measures. %
  It remains to show that it satisfies Jensen's inequality. %
  We have:
  \begin{equation*}
    H\left( \nub \right)
    \;=\; \sum_{y \in \Yset} h\left( \nub_y \right),
    \qquad
    \Esp\left( H\left( \nug \right) \right)
    \;=\; \Esp\left( \sum_{y \in \Yset} h(\nug_y) \right)
    \;=\; \sum_{y \in \Yset} \Esp\left( h(\nug_y) \right).
  \end{equation*}
  For each $y \in \Nset$, since $h$ is concave, Jensen's inequality
  on~$\Rset^n$ (see \Cref{thm:Jensen-Rp} below) gives $h(\nub_y) \geq \Esp\left( h(\nug_y) \right)$. %
  Thus, we can write
  $h(\nub_y) = \left( h(\nub_y) - \Esp\left( h(\nug_y) \right) \right)
  + \Esp\left( h(\nug_y) \right)$, where the two terms in the
  right-hand side are non-negative. %
  Summing over $\Yset$ yields:
  \begin{equation*}
    H\left( \nub \right)
    \;=\; \sum_{y \in \Yset} \Bigl(
      h\left( \nub_y \right) - \Esp\left( h(\nug_y) \right)
    \Bigr) \,+\, \Esp\left( H\left( \nug \right) \right).
  \end{equation*}
  Jensen's inequality and \Cref{equ:Jensen-difference-phi-entropy} follow.
\end{proof}

Uncertainty functionals of this form have been considered for a long
time in the literature (usually on finite sets, but the extension to
countable sets is straightforward), under various names such as
$\phi$-entropies \citep{Burbea1982, bental:1986:rate-disto} or
trace-form generalized entropies \citep{scarfone:2013:entropic,
  tempesta:2015:trace-form}. %
Important special cases are obtained with $h(p) = -p\, \log(p)$ and
$h(p) = (\alpha - 1)^{-1} \left( p - p^\alpha \right)$, $\alpha > 1$,
corresponding respectively to the classical Shannon (or
Boltzmann-Gibbs) entropy,
$H(\nu) = - \sum_y \nu_y\, \log\left( \nu_y \right)$, and to the
$\alpha$-entropy of \cite{havrda:1967:alpha-entropy} and
\cite{tsallis:1988:generalization},
$H(\nu) = (\alpha - 1)^{-1} \bigl( 1 - \sum_y \nu_y^\alpha \bigr)$. %
The special case $\alpha = 2$ corresponds to the Gini-Simpson
diversity index \citep{simpson:1949:measurement},
$H(\nu) = 1 - \sum_y \nu_y^2$.

\subsubsection{Regular quadratic uncertainty functionals}
\label{sec:regular-QUFs}

For any symmetric measurable function
$\rho: \Yset \times \Yset \to \Rplus$, the expression
\begin{equation}
  \label{eq:regular-QUF-def}
  H(\nu) \;=\; \iint \rho(y, y')\, \nu(\dy)\, \nu(\dy')
\end{equation}
defines a measurable probability functional $H: \Pset \to \Rpbar$. %
Uncertainty functionals of this form have been studied, for example,
by \cite{rao:1984:convexity}, under the name \emph{quadratic entropy},
and by \cite{hainy:2014:learningfunctions}, who calls
\emph{distance-based information function} the negative of such an
uncertainty functional. %
More generally, functionals of the
form~\eqref{eq:regular-QUF-def}---not necessarily uncertainty
functionals---are called \emph{regular statistical functionals of
  degree~$2$} in the literature on U-statistics \citep[see,
e.g.,][]{lee:2019:U-stat}.

\begin{remark}
  In the terminology of potential theory, $H(\nu)$ corresponds to the
  negative of the \emph{energy} of~$\nu$ \cite[see,
  e.g.,][]{pronzato2021minimum} associated with the kernel~$\rho$. %
  Note, however, that classical potential theory typically deals with
  singular kernels that are infinite on the diagonal, such as the
  Newtonian kernel $\rho(x, x') \propto \lVert x - x' \rVert^{-1}$
  on~$\Rset^3$. %
  In contrast, we shall be mainly interested in kernels that vanish on
  the diagonal (see \Cref{thm:regular-QUF}).
\end{remark}

\begin{definition}
  Any uncertainty functional of the form~\eqref{eq:regular-QUF-def} is
  called a \emph{regular quadratic uncertainty functional} (regular
  QUF). %
  The symmetric measurable function
  $\rho: \Yset \times \Yset \to \Rpbar$ is called the \emph{kernel} of
  the regular QUF.
\end{definition}

Let $\Pset_H = \left\{ \nu \in \Pset \mid H(\nu) < +\infty \right\}$,
and $\Mset_H = \vect\left( \Pset_H \right)$ denote the vector
space of signed measures spanned by~$\Pset_H$. %
The following result provides a characterization of regular QUFs in
terms of properties of their kernel.

\begin{theorem}\label{thm:regular-QUF}
  Let $H: \Pset \to \Rpbar$ denote a measurable, non-negative
  functional of the form~\eqref{eq:regular-QUF-def}, %
  where $\rho: \Yset \times \Yset \to \Rplus$ is symmetric and
  measurable. %
  Then, $H$~is a regular QUF if, and only if, \vspace{-2mm}
  \begin{enumerate}[i) ]
    
  \item $\rho$ vanishes on the diagonal (i.e., $\rho(y, y) = 0$ for
    all $y \in \Yset$), and

  \item $\rho$ is \emph{conditionally integrally negative definite} (CIND), i.e., %
    \begin{equation}
      \label{equ:CIND}
      \forall \nu \in \Mset_H,\quad
      \nu(\Yset) = 0 \;\Rightarrow\; \iint \rho(y, y')\, \nu(\dy)\, \nu(\dy') \le 0.
    \end{equation}

  \end{enumerate}
  Moreover, if $H$ is a regular QUF, then $\Pset_H$ is convex, 
  and for all random probability
  measures~$\nug$ on~$\left( \Yset, \Pcal(\Yset) \right)$ such
  that~$H(\nub) < +\infty$, we have
  \begin{equation}
    \label{equ:Jensen-difference-regular-QUF}
    H\left( \nub \right) - \Esp\left( H\left( \nug \right) \right)
    \;=\;
    -\, \Esp\left[
      \iint \rho(y, y')\,
      \left( \nug - \nub \right)(\dy)\,
      \left( \nug - \nub \right)(\dy')
    \right].
  \end{equation}
\end{theorem}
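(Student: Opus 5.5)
The plan is to reduce everything to the bilinear form $B(\mu,\mu') = \iint \rho\, \dnu\,\dnu'$, evaluated on non-negative measures where it is always well defined in $\Rpbar$. The key algebraic identity is the expansion, for probability measures $\nu_0,\nu_1$ and $t\in[0,1]$,
\begin{equation*}
  H\bigl((1-t)\nu_0 + t\nu_1\bigr) \;=\; (1-t)H(\nu_0) + tH(\nu_1) \;-\; t(1-t)\iint \rho\, \ddiff(\nu_1-\nu_0)\,\ddiff(\nu_1-\nu_0),
\end{equation*}
valid whenever $\nu_0,\nu_1\in\Pset_H$. Before using it, I will check that the cross term $B(\nu_0,\nu_1)$ is finite. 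For this I first need convexity of $\Pset_H$, which I will establish using only non-negativity of $\rho$: $B(\nu_0,\nu_1)\le$ something finite. This is not automatic, since $\rho\ge0$ gives no Cauchy--Schwarz inequality. In the ``if'' direction, however, CIND applied to $\nu_1-\nu_0$ (which lies in $\Mset_H$) will give $2B(\nu_0,\nu_1)\le H(\nu_0)+H(\nu_1)$, provided the integral makes sense. To make sense of it, I truncate: I apply the inequality to restrictions to sets where $\rho\le n$, or argue via monotone convergence on $\rho\wedge n$. The latter changes the kernel, so instead I would interpret the signed integral as $H(\nu_0)+H(\nu_1)-2B(\nu_0,\nu_1)$ with the convention that CIND forces this to be $\le 0$, hence $B<\infty$. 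This finiteness/convexity point is the step I expect to be delicate.

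\textbf{Necessity.} If $H$ is a regular QUF, then (i) follows from $H(\delta_y)=\rho(y,y)=0$. For (ii), take $\nu\in\Mset_H$ with $\nu(\Yset)=0$ and write its Jordan decomposition $\nu=c(\nu_1-\nu_0)$. The hard part is ensuring that $\nu_0,\nu_1$ can be chosen in $\Pset_H$; since $\nu$ is a finite combination of elements of $\Pset_H$, one may group positive and negative coefficients, so that $\nu_1,\nu_0$ are convex combinations of elements of $\Pset_H$, hence in $\Pset_H$ once convexity of $\Pset_H$ is known. Convexity itself follows from Jensen's inequality with a two-point random measure $\nug\in\{\nu_0,\nu_1\}$: this gives $H(\nub)\le(1-t)H(\nu_0)+tH(\nu_1)<\infty$. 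Plugging the same two-point random measure into the displayed identity then yields $-t(1-t)\iint\rho\,\ddiff(\nu_1-\nu_0)^{\otimes2}\ge0$, which is exactly CIND.

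\textbf{Sufficiency and the identity.} Assume (i) and (ii). Vanishing on Diracs follows from (i), and measurability is given. For Jensen's inequality, take $\nug$ with $H(\nub)<\infty$; if $H(\nub)=\infty$ the inequality is trivial. I first show $\Esp B(\nug,\nub)=B(\nub,\nub)$ by Fubini/Tonelli, since $\rho\ge0$ and $\Esp\nug=\nub$; this is the non-negative analogue of linearity, and likewise $\Esp H(\nug)=\Esp B(\nug,\nug)$. Consequently $\nug\in\Pset_H$ almost surely whenever $\Esp H(\nug)<\infty$; if $\Esp H(\nug)=\infty$ the target inequality must be examined separately. In the latter case I would apply CIND pointwise to the random signed measure $\nug-\nub$, both of whose parts lie in $\Pset_H$ on the relevant event, and then expand:
\begin{equation*}
  \iint\rho\,\ddiff(\nug-\nub)^{\otimes 2} \;=\; H(\nug) - 2B(\nug,\nub) + H(\nub) \;\le\; 0 .
\end{equation*}
This inequality shows that $H(\nug)\le 2B(\nug,\nub)$, whose expectation $2H(\nub)$ is finite, so $\Esp H(\nug)<\infty$ automatically. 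Taking expectations and using $\Esp B(\nug,\nub)=H(\nub)$ then gives precisely \eqref{equ:Jensen-difference-regular-QUF}. Its right-hand side is $\ge0$ by CIND, which yields Jensen's inequality, and convexity of $\Pset_H$ follows as a special case with a two-point $\nug$.

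The main obstacle throughout is bookkeeping of finiteness: every expansion of a signed quadratic form must be preceded by a check that the cross terms are finite. The trick of bounding $H(\nug)$ by $2B(\nug,\nub)$ via CIND is what should make this work.
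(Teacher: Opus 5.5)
\textbf{There is a genuine gap: every step that needs finiteness (an \emph{upper} bound) rests on an inequality that only gives a \emph{lower} bound.} Your algebraic skeleton is sound and is essentially the paper's: the quadratic expansion of $H$ along a segment, $\Esp B(\nug,\nub)=H(\nub)$ by Tonelli, and CIND applied to $\nug-\nub$, which gives $H(\nug)+H(\nub)\le 2B(\nug,\nub)$ and, after taking expectations, the identity and Jensen's inequality. The problems are these.
\begin{enumerate}[i) ]
\item Jensen with a two-point $\nug\in\{\nu_0,\nu_1\}$ gives $H(\nub)\ge(1-t)H(\nu_0)+tH(\nu_1)$, not $\le$. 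So it says nothing about convexity of $\Pset_H$.
\item CIND applied to $\nu_1-\nu_0$ gives $H(\nu_0)+H(\nu_1)\le 2B(\nu_0,\nu_1)$, the reverse of what you wrote. For the variance kernel, $2B(\nu_0,\nu_1)=H(\nu_0)+H(\nu_1)+(m_0-m_1)^2$ with $m_i$ the mean of $\nu_i$. This bound is trivially compatible with $B(\nu_0,\nu_1)=+\infty$, and CIND in any case presupposes that the integral is defined.
\item In the sufficiency part, applying CIND ``pointwise'' to $\nug-\nub$ requires $\nug\in\Pset_H$. That is exactly what is unknown on $\{H(\nug)=+\infty\}$, and if this event had positive probability Jensen's inequality would fail. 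So your treatment of the case $\Esp H(\nug)=+\infty$ is circular.
\item ``Convexity of $\Pset_H$ follows as a special case'' is circular for the same reason, since your Jensen argument starts by assuming $H(\nub)<+\infty$.
\item Convexity of $\Pset_H$ is equivalent to finiteness of $B$ on $\Pset_H\times\Pset_H$, and this fails for general non-negative kernels. So your necessity argument also breaks down: without it you cannot write $\nu\in\Mset_H$ as $c(\nu_1-\nu_0)$ with $\nu_0,\nu_1\in\Pset_H$, nor expand the quadratic form.
\end{enumerate}

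\textbf{What is missing is a quantitative use of negative definiteness.} First obtain CND. In the necessity part, get it from concavity applied to finitely supported probability measures; these lie in $\Pset_H$ automatically because $\rho$ is real-valued. In the sufficiency part, restrict CIND to finitely supported measures.

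The paper then uses the Hilbert-space representation $\rho(y,y')=\lVert\psi(y)-\psi(y')\rVert^2$ of CND kernels to show $H(\nu)<+\infty\iff\int\lVert\psi\rVert^2\,\dnu<+\infty$. This yields at once convexity of $\Pset_H$, finiteness of cross terms, and, whenever $\nub\in\Pset_H$, $\nug\in\Pset_H$ a.s.\ with $\Esp H(\nug)<+\infty$.

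An elementary substitute is CND with three points and weights $(1,1,-2)$, i.e.\ $\rho(y,y')\le 2\rho(y,z)+2\rho(z,y')$.
\begin{itemize}
\item Integrating against $\nug\otimes\nug\otimes\nub$ gives $H(\nug)\le 4B(\nug,\nub)$, hence $\Esp H(\nug)\le 4H(\nub)$.
\item Using a fixed reference point $z_0$, and a $\nu$-typical intermediate point for the converse direction, it gives $H(\nu)<+\infty\iff\int\rho(\cdot,z_0)\,\dnu<+\infty$, hence convexity of $\Pset_H$.
\end{itemize}
With this ingredient inserted, the rest of your argument goes through.
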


\begin{proof}
  See \Cref{sec:proof-app:regular-QUF} for a proof. %
  Note: \citet[][Lemma~3.2]{pronzato2020bayesQuad} proved a similar result under the stronger assumption that the kernel is
  bounded. %
  The proof presented in \Cref{sec:proof-app:regular-QUF} has a lot in
  common with that of \citeauthor{pronzato2020bayesQuad}, but it further
  generalizes the argument to cover unbounded kernels as well.
\end{proof}

Several important uncertainty functionals can be recovered in this
framework, including the variance functional
$H(\nu) = \int y^2 \nu(\dy) - \left( \int y\, \nu(\dy) \right)^2$ on
$\Yset = \Rset$ with $\rho(y, y') = \frac{1}{2} (y - y')^2$, %
Gini's mean difference \citep{yitzhaki2003gini} with
$\rho(y, y') = | y - y' |$, %
or more generally the fractional Brownian variance of
\cite{szekely:2013:energy} with
$\rho(y, y') = \lVert y - y' \rVert^\beta$, $0 < \beta < 2$, on
$\Yset = \Rset^p$ . %
The Gini-Simpson index $H(\nu) = 1 - \sum_y \nu_y^2$ on a finite or
countable set~$\Yset$, already discussed in \Cref{sec:phi-entropies},
is also recovered here with $\rho(y, y') = \one_{y \neq y'}$.

Additionally, given a $\sigma$-finite measure space
$(\Tset, \Tcal, \mu)$ and a measurable function
$\xi: \Yset \times \Tset \to \Rset$, the kernel
\begin{equation*}
  \rho(y, y') \;=\; \frac{1}{2} \int_\Tset \left( \xi(y, t) - \xi(y', t) \right)^2\, \mu(\dt)
\end{equation*}
yields the integrated variance functional:
\begin{equation}\label{equ:int-var}
  H(\nu) \;=\; \int_\Tset \var_\nu\left( \xi(Y, t) \right)\, \mu(\dt),
\end{equation}
where the index~$\nu$ indicates that~$Y \sim \nu$. %
The Gini-Simpson index and the integrated Bernoulli variance
functional \citep{bect:2019:supermartingale} are both special cases of
\Cref{equ:int-var}, where $\xi$ takes its values in~$\{ 0, 1 \}$.

\bigbreak

The term CIND, introduced in \Cref{thm:regular-QUF}, does not appear
to have been used previously in the literature. %
It is, however, consistent with the %
terminology used by \cite{pronzato2023blue}, since $\rho$ is CIND in
the sense of \Cref{thm:regular-QUF} if and only if it is
\emph{conditionally integrally positive definite} (CIPD) in the sense
of \cite{pronzato2023blue}. %
See also \cite{pronzato2020bayesQuad, pronzato2021minimum} and
\cite{sriperumbudur2011universality} for related terminology.%

\begin{proposition}\label{prop:CIND-equiv-CND}
  Let $\rho: \Yset \times \Yset \to \Rplus$ be symmetric, measurable
  and vanishing on the diagonal. %
  Then $\rho$~is CIND if, and only if, it is \emph{conditionally
    negative definite} (CND), that is, for all $m \ge 1$, all points $y_1$,
  \ldots, $y_m \in \Yset$, and all $\lambda_1$, \ldots,
  $\lambda_m \in \Rset$ such that $\sum_{k=1}^m \lambda_k = 0$,
  \begin{equation}
    \sum_{k,l}^{m} \lambda_k \lambda_l\, \rho(y_k, y_l) \;\le\; 0.%
  \end{equation}
\end{proposition}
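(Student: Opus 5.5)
The plan is to prove the two implications separately. The direction CIND $\Rightarrow$ CND is essentially a specialization to discrete measures. The direction CND $\Rightarrow$ CIND is the real content.

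\emph{CIND $\Rightarrow$ CND.} Given points $y_1,\dots,y_m$ and weights $\lambda_k$ with $\sum_k \lambda_k = 0$, consider the signed measure $\nu = \sum_k \lambda_k \delta_{y_k}$. Writing $\nu = \nu^+ - \nu^-$ with equal total masses $c$, we have $\nu = c\,(\nu^+/c - \nu^-/c)$. This is a multiple of a difference of two finitely supported probability measures. Every finitely supported probability $\mu$ lies in $\Pset_H$, because $H(\mu) = \sum_{k,l} \mu_k\mu_l\,\rho(y_k,y_l) < +\infty$ since $\rho$ is real-valued. Hence $\nu \in \Mset_H$ with $\nu(\Yset)=0$. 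Applying \eqref{equ:CIND} gives $\sum_{k,l}\lambda_k\lambda_l\,\rho(y_k,y_l) \le 0$.

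\emph{CND $\Rightarrow$ CIND.} Let $\nu \in \Mset_H$ with $\nu(\Yset)=0$.

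First, reduce to $\nu = \mu_1 - \mu_2$ with $\mu_1,\mu_2 \in \Pset_H$. Write $\nu = \sum_i a_i \nu_i$ with $\nu_i \in \Pset_H$, and group the positive and negative coefficients. The condition $\nu(\Yset)=0$ forces $\sum_{a_i>0} a_i = \sum_{a_i<0} |a_i| =: c$. Then $\nu = c(\mu_1-\mu_2)$, where $\mu_1$ and $\mu_2$ are convex combinations of elements of $\Pset_H$.

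Next, check that such convex combinations stay in $\Pset_H$. Expanding $\iint\rho\,\ddiff\mu_1\,\ddiff\mu_1$ produces cross terms $\iint \rho\, \ddiff\nu_i\,\ddiff\nu_j$, and these must be shown finite. I expect this to follow from a triangle-type inequality for $\sqrt{\rho}$. A CND kernel vanishing on the diagonal has $\sqrt{\rho}$ a semimetric (Schoenberg), so $\rho(y,y') \le 2\rho(y,z)+2\rho(z,y')$. Integrating against $\nu_i(\dy)\,\nu_j(\dy')\,\nu_i(\ddiff z)$ bounds the cross term by $2H(\nu_i)+2\iint \rho\,\ddiff\nu_i\,\ddiff\nu_j$. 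That is circular, so I would instead integrate $z$ against a suitable mixture. Failing that, I would use the Hilbert embedding directly: by Schoenberg, $\rho(y,y') = \tfrac12\|\Phi(y)-\Phi(y')\|^2$ for some feature map $\Phi$ into a Hilbert space. Then $H(\mu)$ is a variance-type quantity, finiteness of $H(\nu_i)$ means $\Phi$ has finite second moment under $\nu_i$ (after centering at a point), and the cross terms are finite by Cauchy--Schwarz.

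Finally, conclude using the same embedding. With $\rho(y,y') = \tfrac12\|\Phi(y)-\Phi(y')\|^2$, expand for $\nu = \mu_1-\mu_2$ of total mass zero. The squared-norm terms $\|\Phi(y)\|^2$ (taken relative to a base point $\Phi(y_0)$, so they are integrable) cancel because $\nu(\Yset)=0$. This leaves
\[
  \iint \rho\,\ddiff\nu\,\ddiff\nu \;=\; -\Bigl\|\int (\Phi(y)-\Phi(y_0))\,\nu(\dy)\Bigr\|^2 \;\le\; 0,
\]
where the Bochner integral exists by the finite second moment.

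The main obstacle is making this rigorous for unbounded $\rho$. The issues are measurability of $\Phi$ and the existence of the Bochner integral; separability can be handled by working in the closed span of $\Phi(\Yset)$, and it is cleanest to use weak (Pettis) integrals with $\int\|\Phi-\Phi(y_0)\|\,\ddiff\mu_i<\infty$. A possible alternative is truncation, approximating $\nu$ by finitely supported measures. That route seems harder without topology, so I would commit to the Schoenberg embedding, with Fubini justified by absolute integrability.
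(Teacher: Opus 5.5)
Your proposal is correct and follows essentially the same route as the paper. One direction uses that finitely supported measures lie in $\Mset_H$, giving CIND $\Rightarrow$ CND. The converse combines the Schoenberg representation of $\rho$, finite second moments of the feature map under elements of $\Pset_H$ (hence convexity of $\Pset_H$ and the reduction to $\nu = c(\mu_1 - \mu_2)$), a Pettis integral, and cancellation of the squared-norm terms thanks to $\nu(\Yset) = 0$, leaving $-\bigl\lVert \int (\Phi - \Phi(y_0))\, \ddiff\nu \bigr\rVert^2 \le 0$.

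The one thing to correct is your claim that a Bochner integral exists, with separability arranged by passing to the closed span of $\Phi(\Yset)$. For $\rho(y,y') = \one_{y \neq y'}$ on $\Rset$ one has $\Phi(y) = e_y$ in $\ell^2(\Rset)$, whose closed span is non-separable; $\Phi$ is then not Bochner integrable with respect to any atomless $\nu$. So the Pettis integral is genuinely needed, not just cleaner. Its weak measurability comes, as in the paper, from the measurability of $\rho$ plus a density argument: the map $y \mapsto \langle \Phi(y') - \Phi(y_0),\, \Phi(y) - \Phi(y_0) \rangle = \rho(y, y_0) + \rho(y', y_0) - \rho(y, y')$ is measurable.
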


\begin{remark}
  It follows from classical results on CND functions that $\rho$ is
  symmetric, CND and vanishes on the diagonal if, and only,
  there exists a symmetric positive definite
  kernel~$k: \Yset \times \Yset \to \Rset$ such that
  $\rho(y, y') = \lVert k(y,\cdot) - k(y',\cdot) \rVert^2_\Hset$,
  where $\Hset$ is the reproducing kernel Hilbert space with
  kernel~$k$ %
  (see \Cref{thm:CND-kernel} and \Cref{rem:RKHS-feat-map} for
  details). %
  In this setting, for all~$\nu \in \Pset_H$, the kernel embedding
  $k_\nu = \int k(y,\cdot)\, \nu(\dy)$ is well-defined (see
  \Cref{lem:regular-QUF-embedding} and
  \Cref{rem:kernel-mean-embedding}). %
  Moreover, for all random probability measures~$\nug$
  on~$\left( \Yset, \Pcal(\Yset) \right)$ such that
  $\nub \in \Pset_H$, we have $\nug \in \Pset_H$ almost surely, and it
  is easily checked that the right-hand side of
  \Cref{equ:Jensen-difference-regular-QUF} is equal to
  $\Esp\left( \lVert k_{\nug} - k_{\nub} \rVert_\Hset^2 \right)$. %
  In other words, the (unnormalized) sensitivity indices based on a
  regular QUF coincide with the MMD-based indices of
  \cite{daveiga:2021}.
\end{remark}

\section{Concavity and Jensen's inequality}\label{sec3}

\subsection{Main results and an open problem}

We turn now to the problem of characterizing the probability
functionals that satisfy Jensen's inequality. %
More specifically, we investigate the relation between Jensen's
inequality and concavity. %
This is indeed a natural question, considering the well-known
equivalence, for functions defined on~$\Rset^n$, between concavity and
Jensen's inequality. %
We recall here this equivalence, in form that is suitable for
comparison with \Cref{def:satisfy-Jensen}.

\begin{theorem}[Jensen's inequality on~$\Rset^n$]\label{thm:Jensen-Rp}
  Let $f:\Rset^n \to \Rbar$ be a measurable function. %
  Then, $f$ is concave if and only if, for all random vectors~$X$ in
  $\Rset^n$ such that $\Esp\left( \lVert X \rVert \right) < +\infty$
  and $f(X)$ is quasi-integrable, %
  the inequality $f\left( \Esp(X) \right) \ge \Esp\left( f(X) \right)$
  holds.
\end{theorem}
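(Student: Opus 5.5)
We prove the two implications separately. Throughout, ``concave'' for an extended-valued $f:\Rset^n\to\Rbar$ means that the hypograph $\{(x,a)\in\Rset^n\times\Rset : a\le f(x)\}$ is convex. Equivalently, for all $x,y$, all $t\in[0,1]$ and all real $a\le f(x)$, $b\le f(y)$, we have $ta+(1-t)b\le f(tx+(1-t)y)$.

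\emph{Necessity.} This follows DeGroot's Bernoulli argument. Fix $x,y$, $t\in(0,1)$ and real $a\le f(x)$, $b\le f(y)$. Neither $f(x)$ nor $f(y)$ equals $-\infty$, so the random vector $X$ equal to $x$ with probability $t$ and to $y$ with probability $1-t$ is integrable. Moreover $f(X)$ is bounded below, hence quasi-integrable. The hypothesis then gives
\[
f\bigl(tx+(1-t)y\bigr)\;\ge\; t f(x)+(1-t)f(y)\;\ge\; ta+(1-t)b,
\]
which is the required concavity inequality.

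\emph{Sufficiency.} Let $X$ be integrable with $f(X)$ quasi-integrable, and set $m=\Esp(X)$. If $f(m)=+\infty$ or $\Esp(f(X))=-\infty$ there is nothing to prove. Otherwise $\Esp(f(X)^-)<+\infty$, so $X\in D=\{f>-\infty\}$ almost surely, and $D$ is convex by concavity. The plan is to reduce to the case where $m$ is a relative interior point of the relevant convex set, and then integrate a supergradient inequality.

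\emph{Step 1: dimension reduction.} Let $A$ be the smallest affine subspace with $X\in A$ almost surely; it exists because affine subspaces satisfy the descending chain condition. We claim $m\in\ri(D\cap A)$. If not, since $m\in A$ and $D\cap A$ is convex, proper separation within $A$ yields a linear form $\ell$ such that $\ell(z)\ge\ell(m)$ for all $z\in D\cap A$, and $\ell$ is non-constant on $D\cap A$. Hence $\ell(X)\ge\ell(m)$ almost surely, while $\Esp(\ell(X))=\ell(m)$. Therefore $\ell(X)=\ell(m)$ almost surely, so $X$ lives in the proper affine subspace $A\cap\{\ell=\ell(m)\}$. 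This contradicts the minimality of $A$ (note $\ell$ is non-constant on $A$ because it is non-constant on $D\cap A\subset A$). I expect this step, with its careful handling of relative interiors and extended values, to be the main obstacle.

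\emph{Step 2: supergradient.} Consider the restriction $g=f|_A$, a concave function on $A$ with $m\in\ri(\dom g)$ and $g(m)$ finite. If $g$ took the value $+\infty$ at some point of $\ri(\dom g)$, then by the usual segment-extension argument it would equal $+\infty$ at every point of $\ri(\dom g)$, including $m$. This is excluded, so $g$ is proper and finite on $\ri(\dom g)$. By the classical existence of supergradients at relative interior points, there is a vector $u$ parallel to $A$ with
\[
f(z)\;\le\; f(m)+\langle u, z-m\rangle \qquad\text{for all } z\in A,
\]
where the inequality is trivial at points $z$ with $f(z)=-\infty$.

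\emph{Step 3: conclusion.} Apply the inequality of Step 2 at $z=X\in A$. The right-hand side $f(m)+\langle u, X-m\rangle$ is integrable with mean $f(m)$, since $\Esp\lVert X\rVert<+\infty$. Taking expectations, which is legitimate because $f(X)$ is quasi-integrable, yields $\Esp(f(X))\le f(m)=f(\Esp(X))$.
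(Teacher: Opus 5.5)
Your proof is correct. The paper gives no proof of \Cref{thm:Jensen-Rp}: it is recalled as a classical fact, so there is only a partial argument to compare with.

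Your ``Jensen $\Rightarrow$ concave'' part is exactly the two-point (Bernoulli) construction the paper uses, on probability measures, in the proof of \Cref{thm:degroot62}. Your hypograph definition agrees with the paper's convention in \Cref{def:concavity}, which requires the inequality only on $\{f>-\infty\}$.

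For the converse you follow the standard convex-analytic route: place $\Esp(X)$ in the relative interior of the domain inside the minimal affine support of $X$, take a supergradient there, and integrate it. Compared with a bare citation, this treats extended values and degenerate distributions explicitly, and that is precisely the regime in which the paper uses the theorem. In \Cref{thm:degroot62}, $H$ is extended by $-\infty$ outside a subset of the simplex, which has empty interior in $\Rset^n$. An interior-point supergradient argument would therefore not suffice there, and your Step~1 is what is really needed.

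Your argument also shows where finite dimension enters. One needs, at the mean, a supergradient that can be integrated, i.e.\ a linear form commuting with expectation, and this is what is missing in the counterexamples of \Cref{sec:counterexample}. \Cref{prop:expected-loss} is the paper's way of postulating such integrable affine majorants directly. \Cref{thm:min-finite-case} shows they exist for non-negative concave $H$ on a finite set; there boundary points are handled by induction over faces with $+\infty$ loss entries, where you use the minimal affine support instead.

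Two small precision points.
\begin{itemize}
\item In Step~2 you only exclude the value $+\infty$ at relative interior points. The supergradient inequality is needed at every $z\in A$, including relative boundary points of $\dom g$ that $X$ may charge with positive probability. The same segment argument covers them: if $g(z)=+\infty$ for some $z\in\dom g$, write $m$ as a strict convex combination of $z$ and $m+\varepsilon(m-z)\in\dom g$ to get $g(m)=+\infty$.
\item In Step~1, proper separation carried out within $A$ directly yields $\ell$ non-constant on $A$, which is all you use. Non-constancy on $D\cap A$ does not follow from proper separation alone, since the set could lie in the separating hyperplane with $m$ strictly off it. It does hold here, because minimality of $A$ forces the affine hull of $D\cap A$ to be $A$.
\end{itemize}
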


\begin{remark}
  For concave functions with values in~$\Rset \cup \{ -\infty \}$, the
  statement can be strengthened: in this case,
  $\Esp(f^+(X)) < +\infty$, and thus $f(X)$ is automatically
  quasi-integrable.
\end{remark}

Concavity for probability functionals is defined as follows. (This is
the usual definition of concavity for extended-real valued functions,
when $\Pset$ is seen as a convex subset of the set of bounded signed
measures on~$\Ymsp$.)

\begin{definition}\label{def:concavity}
  A probability functional $H:\Pset \to \Rbar$ is called
  \emph{concave} if, for all $\lambda \in (0,1)$ and all $\nu_1$,
  $\nu_2 \in \dom(H) = \left\{ \nu \in \Pset \mid H(\nu) > -\infty
  \right\}$,
  \begin{equation}
    H\prt{\lambda \nu_1 + (1-\lambda) \nu_2}
    \;\geq\;
    \lambda H(\nu_1) + (1-\lambda) H(\nu_2).
  \end{equation}
\end{definition}

In the case where $\Yset$ is a finite set, \citet[Theorem
2.1]{degroot:1962:uncertainty} established, for probability
functionals, the equivalence between concavity and the property of
being decreasing on average (which is equivalent, by
\Cref{prop:equiv-Jensen-DoA}, to Jensen's inequality). %
In fact, a part of \citeauthor{degroot:1962:uncertainty}'s argument
remains valid for a general measurable space~$\Ymsp$, which yields the
following statement.

\begin{theorem}[\citealp{degroot:1962:uncertainty}]\label{thm:degroot62}
  Let $H$ denote a probability functional on a measurable
  space~$\Ymsp$. %
  If $H$ satisfies Jensen's inequality, then $H$ is concave. %
  Conversely, if $H$ is concave and $\Yset$ is finite, then $H$
  satisfies Jensen's inequality.
\end{theorem}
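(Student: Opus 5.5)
For the first part (Jensen's inequality implies concavity), I will take $\lambda \in (0,1)$ and $\nu_1, \nu_2 \in \dom(H)$, and apply \Cref{def:satisfy-Jensen} to a two-point random measure. On $\Omega = \{1,2\}$ with $\Prob(\{1\}) = \lambda$, define $\nug(1) = \nu_1$ and $\nug(2) = \nu_2$. This is trivially a probability kernel, so $\nug$ is a random measure, and its intensity is $\nub = \lambda\nu_1 + (1-\lambda)\nu_2$.

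The random variable $H(\nug)$ takes only the values $H(\nu_1)$ and $H(\nu_2)$, both in $(-\infty, +\infty]$. Hence $\Esp(H(\nug)^-) < +\infty$, so $H(\nug)$ is quasi-integrable. Jensen's inequality then gives
\[
H(\lambda\nu_1+(1-\lambda)\nu_2) \ge \lambda H(\nu_1) + (1-\lambda) H(\nu_2),
\]
with the convention that the right-hand side is $+\infty$ if either term is infinite. This is exactly \Cref{def:concavity}. No measurability subtleties arise, since $\Omega$ is finite.

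For the converse, let $\Yset = \{y_1,\dots,y_n\}$. The map $\nu \mapsto (\nu(\{y_1\}),\dots,\nu(\{y_n\}))$ identifies $\Pset$ with the simplex $\Delta_n \subset \Rset^n$. Since a measure on a finite set is determined by its point masses, the $\sigma$-algebra $\Fcal$ corresponds to the Borel $\sigma$-algebra of $\Delta_n$. A random measure $\nug$ is then a random vector $X$ in $\Delta_n$, which is bounded and hence integrable, with $\Esp(X) = \nub$. Define $f:\Rset^n \to \Rbar$ by $f = H$ on $\Delta_n$ and $f = -\infty$ off $\Delta_n$. Then $f$ is measurable, because $\Delta_n$ is closed. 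It is also concave: $\Delta_n$ is convex, and concavity in the sense of \Cref{def:concavity} only constrains points of $\dom(H)$, which matches the usual definition of concavity for $f$ with values in $\Rbar$, whose domain is $\{f>-\infty\}\subset\Delta_n$. Applying \Cref{thm:Jensen-Rp} to $X$ yields $H(\nub) = f(\Esp X) \ge \Esp f(X) = \Esp H(\nug)$ whenever $H(\nug)$ is quasi-integrable.

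The main obstacle is checking that the extended-valued concavity convention used in \Cref{thm:Jensen-Rp} matches \Cref{def:concavity}, in particular when $f$ takes the value $+\infty$. If \Cref{thm:Jensen-Rp} is only available for proper concave functions, I will handle the improper case directly. A concave function that equals $+\infty$ at some point of a convex set equals $+\infty$ on the relative interior of the convex hull of its domain. I will therefore split on whether $\Prob(X \in \ri(\text{face}))$ is positive, taking the face to be the smallest face of $\Delta_n$ containing $\nub$ (note that $X$ lies in that face almost surely). If $\nub$ lies in that relative interior, it lies in the relative interior of the convex hull of the support of $X$, so $H(\nub)=+\infty$ and the inequality is trivial. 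Otherwise I will argue by induction on the dimension of the face.
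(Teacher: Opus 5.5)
Your main line is the paper's proof: a two-point random measure for necessity, and for sufficiency the identification of $\Pset$ with a subset of the simplex, extension by $-\infty$, and an appeal to \Cref{thm:Jensen-Rp}. There is one small omission. The map $\nu \mapsto (\nu(\{y_1\}),\dots,\nu(\{y_n\}))$ presupposes $\Ycal = \Pcal(\Yset)$, but the statement allows any $\sigma$-algebra on the finite set $\Yset$, and its singletons need not be measurable. Use the atoms $A_1,\dots,A_m$ of $\Ycal$ instead. This identifies $\Pset$, with $\Fcal$ corresponding to the Borel sets, with $\Delta_m$, which embeds as a face of $\Delta_n$. That is why the paper speaks of a convex subset $C$ of the simplex rather than the simplex itself.

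The issue you flag as the main obstacle is not one. \Cref{thm:Jensen-Rp} is stated for measurable $f:\Rset^n\to\Rbar$, with the same notion of concavity as \Cref{def:concavity}, so it applies verbatim to your extension $f$, whether or not $f$ takes the value $+\infty$.

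Your fallback argument, however, would fail as written. For the smallest face $F$ of $\Delta_n$ containing $\nub$, one always has $\nub \in \ri(F)$. So your first case always occurs, and its conclusion $H(\nub) = +\infty$ is unjustified: $H = +\infty$ at one point forces $H = +\infty$ only on $\ri(\dom H)$, not on $\ri(F)$.

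For example, take $n = 3$ and write $\nu_k = \nu(\{y_k\})$. Set $H(\nu) = +\infty$ if $\nu_3 > 0$, and $H(\nu) = \nu_1 \nu_2$ if $\nu_3 = 0$. This $H$ is concave. Let $\nug$ equal $\delta_{y_1}$ or $\delta_{y_2}$ with probability $1/2$ each. Then $\nub \in \ri(F)$ with $F = \{\nu_3 = 0\}$, yet $H(\nub) = 1/4$.

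A correct direct argument goes as follows. First discard the trivial case $\Esp H(\nug) = -\infty$, so that $\nug \in \dom H$ almost surely. Then split on whether $\nub \in \ri(\dom H)$. If it is, either $H$ is improper and $H(\nub) = +\infty$, or $H$ is proper and a supergradient at $\nub$ gives the inequality. If it is not, a properly separating hyperplane shows that $\nug$ lies almost surely in a proper face of $\dom H$. The restriction of $H$ to that face is again concave, and you induct on $\dim(\dom H)$.
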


\begin{proof}
  We follow \citeauthor{degroot:1962:uncertainty}'s original proof,
  with minor adaptations.

  Let $H$ be a probability functional that satisfies Jensen's
  inequality. %
  Let $\lambda \in \left( 0, 1 \right)$ and %
  $\nu$, $\nu' \in \dom(H)$. %
  Let $\nug$ be a random measure equal to~$\nu$ with
  probability~$\lambda$, and $\nu'$ otherwise. %
  Then $\nub = \lambda\,\nu + (1-\lambda)\,\nu'$, and $H(\nug)$ is
  quasi-integrable (since $\Esp(H^-(\nug)) < +\infty$). %
  Applying Jensen's inequality to~$\nug$ yields the concavity of~$H$.

  Conversely, assume that $\Yset$ has finite cardinality~$n$, and let
  $H$~be a concave probability functional on~$\Ymsp$ for some
  $\sigma$-algebra $\Ycal$ on~$\Yset$. %
  Then the set~$\Pset$ of all probability measures on~$\Ymsp$ can be
  identified measurably with a convex subset~$C$ of the
  $(n-1)$-dimensional probability simplex in~$\Rset^n$. %
  Thus, $H$ can be identified with a concave function on~$\Rset^n$
  (using the value~$-\infty$ to extend outside~$C$), %
  and therefore Jensen's inequality for the probability functional~$H$
  follows from \Cref{thm:Jensen-Rp}. %
  (Note: $\Pset$ coincides exactly with the probability simplex
  if~$\Ycal = \Pcal(\Yset)$.)
\end{proof}

\citet[Theorem 2]{hainy:2014:learningfunctions} states a
generalization of \Cref{thm:degroot62}, formulated using the property
of being decreasing on average (recall \Cref{prop:equiv-Jensen-DoA})
rather than Jensen's inequality (\Cref{def:satisfy-Jensen}), which
removes the finiteness restriction on~$\Yset$ in the converse. %
The proposed proof, however, is not sufficient: %
in particular, it does not specify how the use of \Cref{thm:Jensen-Rp}
can be replaced, since in general $\Pset$ can no longer be identified
with a convex subset of some Euclidean space, when $\Yset$~is not
finite.

We are now in a position to state our main results, which prove that
the equivalence between concavity and Jensen's inequality does not
hold for probability functionals on general measurable spaces. %
Our first result is based on an earlier counterexample, proposed by
\cite{perlman74} in the setting of convex functionals defined on
topological vector spaces. %
Our second result relies on a different type of counterexample, which
is new to the best of our knowledge.

\begin{theorem}\label{thm:main-result-1}
  Assume that
  $\Ymsp \cong \left( \Nset, \Pcal(\Nset) \right) \otimes \left(
    \Yset', \Ycal' \right)$, for some measurable
  space~$\left( \Yset', \Ycal' \right)$. %
  Then, there exists a concave, measurable probability functional
  on~$\Ymsp$ that does not satisfy Jensen's inequality.
\end{theorem}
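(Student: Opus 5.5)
The plan is to build an explicit counterexample in the spirit of \cite{perlman74}. The functional will equal $0$ on measures whose $\Nset$-marginal has finite support and $-\infty$ elsewhere. Jensen's inequality will then fail for a random Dirac measure whose $\Nset$-coordinate has infinite support. Using the isomorphism, I identify $\Yset$ with $\Nset \times \Yset'$. For $\nu \in \Pset$ and $n \in \Nset$, I write $p_n(\nu) = \nu\left( \{0, \ldots, n\} \times \Yset' \right)$. Let $\Pset_{\mathrm{f}} = \left\{ \nu \in \Pset \mid \exists n,\ p_n(\nu) = 1 \right\}$ be the set of probability measures whose $\Nset$-marginal has finite support. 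Define $H(\nu) = 0$ if $\nu \in \Pset_{\mathrm{f}}$, and $H(\nu) = -\infty$ otherwise.

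Measurability is immediate. Each evaluation map $p_n = \pi_{\{0,\ldots,n\}\times\Yset'}$ is $\Fcal$-measurable, so $\Pset_{\mathrm{f}} = \bigcup_n \{ p_n = 1 \} \in \Fcal$, and $H = -\infty \cdot \one_{\Pset \setminus \Pset_{\mathrm{f}}}$ is measurable.

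For concavity in the sense of \Cref{def:concavity}, note that $\dom(H) = \Pset_{\mathrm{f}}$. If $\nu_1$ and $\nu_2$ are in $\Pset_{\mathrm{f}}$, with $p_{n_1}(\nu_1) = p_{n_2}(\nu_2) = 1$, then any convex combination $\nu$ satisfies $p_{n_1 \vee n_2}(\nu) = 1$. Hence $H(\lambda\nu_1 + (1-\lambda)\nu_2) = 0 \ge 0 = \lambda H(\nu_1) + (1-\lambda) H(\nu_2)$. The value $-\infty$ is essential here. A finite negative value outside $\Pset_{\mathrm{f}}$ would break concavity, because mixing a finitely supported measure with one of infinite support would give a value below the convex combination. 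Working with extended values avoids this.

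To violate Jensen's inequality, fix $y_0 \in \Yset'$; the space $\Yset'$ is nonempty, since otherwise $\Pset$ would be empty. Let $N$ be a random variable with $\Prob(N = n) = 2^{-(n+1)}$, and set $\nug = \delta_{(N, y_0)}$. This is a random measure, since $\nug(B) = \one_B(N, y_0)$ is measurable in $\omega$ for each $B$. Every realization lies in $\Pset_{\mathrm{f}}$, so $H(\nug) = 0$ almost surely; it is trivially quasi-integrable and $\Esp(H(\nug)) = 0$. The intensity measure is $\nub = \sum_n 2^{-(n+1)} \delta_{(n, y_0)}$, and $p_n(\nub) = 1 - 2^{-(n+1)} < 1$ for every $n$. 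Hence $\nub \notin \Pset_{\mathrm{f}}$ and $H(\nub) = -\infty < 0 = \Esp(H(\nug))$, contradicting \Cref{equ:Jensen-ineq-measures}.

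No step looks genuinely hard. The only point needing care is the choice of the value off $\Pset_{\mathrm{f}}$, which must be $-\infty$ and not a finite negative number, for the concavity reason given above. If a real-valued example were preferred, I would instead try a Perlman-type construction, for instance $H(\nu) = -\sum_n c_n\, \nu(\{n\}\times\Yset')$ restricted suitably, but the extended-valued version above already proves the theorem as stated.
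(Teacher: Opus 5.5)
Your proof is correct, but it takes a different route from the paper's. The paper adapts Perlman's counterexample. It sets $g(\nu) = \liminf_{y\to+\infty}(-y^2\nu_y)$ and replaces the value $-\infty$ by $0$, which gives a functional with values in $(-\infty,0]$. Concavity follows from the superadditivity and positive homogeneity of $\liminf$, plus a case analysis for the truncation. Jensen's inequality then fails for $\nug = \delta_X$ with $\Prob(X=x)\propto(1+x)^{-2}$, since $H(\nub) = -6/\pi^2 < 0 = \Esp(H(\nug))$.

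You use the same kind of random Dirac mass, but with the concave indicator $-\infty\cdot\one_{\Pset\setminus\Pset_{\mathrm{f}}}$ of the convex set $\Pset_{\mathrm{f}}$. Concavity is then immediate, because \Cref{def:concavity} only tests pairs in $\dom(H)$. The violation comes from the intensity of a random element of $\Pset_{\mathrm{f}}$ falling outside $\Pset_{\mathrm{f}}$. This cannot happen in $\Rset^n$, where the mean of an integrable random vector lying almost surely in a convex set stays in that set. Handling a general $\Yset'$ directly through the $\Nset$-marginal, instead of reducing to a trivial $\Yset'$, is also fine.

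Your argument is shorter and isolates this geometric mechanism cleanly. The paper's construction instead gives a \emph{finite}-valued counterexample. As the remark following it stresses, this shows the failure is not an artifact of allowing the value $-\infty$, a point a $\{0,-\infty\}$-valued functional cannot make.

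Note also that your real-valued fallback, $H(\nu) = -\sum_n c_n\,\nu(\{n\}\times\Yset')$, would not work as written. For $c_n \ge 0$ it has the form $\nu \mapsto -\int c\,\ddiff\nu$ with $c \ge 0$ measurable. Tonelli's theorem then gives $\Esp(H(\nug)) = H(\nub)$ for every random measure, so Jensen holds with equality. Some non-$\sigma$-additive ingredient, such as the $\liminf$ in the paper's $g$, is needed.
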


\begin{theorem}\label{thm:main-result-2}
  Assume that
  $\Ymsp \cong \left( \Rset, \Bcal(\Rset) \right) \otimes \left(
    \Yset', \Ycal' \right)$, for some measurable
  space~$\left( \Yset', \Ycal' \right)$. %
  Then, there exists a concave, non-negative, measurable probability
  functional on~$\Ymsp$ that vanishes on Dirac measures but does not
  satisfy Jensen's inequality.
\end{theorem}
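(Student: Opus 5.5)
The plan is to build the counterexample from a $\{0,+\infty\}$-valued functional. For such a functional $H$, concavity in the sense of \Cref{def:concavity} reduces to a purely combinatorial "face" condition on the zero set $Z = \{ H = 0 \}$: whenever $\lambda \nu_1 + (1-\lambda)\nu_2 \in Z$ with $\lambda \in (0,1)$, both $\nu_1$ and $\nu_2$ must lie in~$Z$. Indeed, if the mixture has $H = +\infty$ the concavity inequality is trivial, and if it has $H = 0$ we need the right-hand side to be~$0$. Jensen's inequality, on the other hand, fails as soon as some random measure~$\nug$ has $\nub \in Z$ but $\Prob(\nug \notin Z) > 0$, since then $H(\nub) = 0 < +\infty = \Esp(H(\nug))$. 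Non-negativity makes $H(\nug)$ quasi-integrable, so this is a genuine violation. The task is therefore to find a face $Z$ that contains the Dirac measures but is not closed under integral (as opposed to finite) mixtures.

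First I treat $\Ymsp = (\Rset, \Bcal(\Rset))$. Let $a(\nu) = \sup_{y} \nu(\{y\})$ denote the largest atom mass, and set
\[
Z = \{ \nu \in \Pset : a(\nu) \in \{0, 1\} \},
\]
that is, nonatomic measures together with Dirac measures. Define $H(\nu) = 0$ on $Z$ and $H(\nu) = +\infty$ otherwise. The face property is checked as follows. If $\lambda\nu_1 + (1-\lambda)\nu_2$ is nonatomic, then so are $\nu_1$ and $\nu_2$, because $\lambda\nu_1(\{y\}) \le$ the mixture's mass at~$y$. If the mixture is $\delta_y$, then both $\nu_i$ vanish on $\{y\}^c$, hence $\nu_1 = \nu_2 = \delta_y$. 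The functional is clearly non-negative and vanishes on Diracs.

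For the failure of Jensen's inequality, take $U \sim \mathcal{U}[0,1]$ and
\[
\nug = \tfrac12 \delta_U + \tfrac12 \delta_{U+2}.
\]
Each realization has two atoms of mass $1/2$, so $H(\nug) = +\infty$ surely. Meanwhile $\nub$ is the uniform mixture on $[0,1] \cup [2,3]$, which is nonatomic, so $H(\nub) = 0$. The map $\omega \mapsto \nug(\omega, B) = \frac12 \one_B(U) + \frac12 \one_B(U+2)$ is measurable, so $\nug$ is a legitimate random measure.

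The main technical step is the $\Fcal$-measurability of~$H$, which amounts to measurability of $\nu \mapsto a(\nu)$. I would write
\[
a(\nu) = \lim_{n \to \infty} \max_{k \in \mathbb{Z}} \nu\bigl( [k 2^{-n}, (k+1) 2^{-n}) \bigr).
\]
For fixed~$n$ this is a countable supremum of evaluation maps $\pi_B$, hence $\Fcal$-measurable. The sequence is nonincreasing in~$n$, since the dyadic partitions are nested. Its limit equals the largest atom mass: any atom is contained in one cell at every level, and conversely a standard compactness/continuity-from-above argument shows that masses bounded below by $\varepsilon$ on shrinking nested cells force an atom of mass $\ge \varepsilon$. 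Some care is needed because the maximizing cell may vary with~$n$. I would handle this by observing that only finitely many cells can carry mass $\ge \varepsilon$, then applying König's lemma to the resulting tree of nested cells, together with tightness of~$\nu$. Hence $Z = a^{-1}(\{0,1\})$ belongs to~$\Fcal$.

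Finally, for the general case $\Ymsp \cong (\Rset, \Bcal(\Rset)) \otimes (\Yset', \Ycal')$, let $p$ denote the first-coordinate projection and set $\tilde H(\nu) = H(\nu \circ p^{-1})$. The push-forward map is affine and $\Fcal$-measurable, since $(\nu\circ p^{-1})(B) = \pi_{B \times \Yset'}(\nu)$. It maps Diracs to Diracs, so $\tilde H$ is concave, measurable, non-negative and vanishes on Dirac measures. The counterexample transfers by taking $\nug' = \frac12 \delta_{(U, y_0)} + \frac12 \delta_{(U+2, y_0)}$ for a fixed $y_0 \in \Yset'$; the image of $\nug'$ under~$p$ is the $\nug$ above, and that of its intensity is $\nub$.
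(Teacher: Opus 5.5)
Your proof is correct. It is a variant of the paper's construction rather than a copy of it.

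The paper uses $H(\nu)=\sum_{s\in\Rset}\nu_s(1-\nu_s)$, the integrated variance of $\one_{Y=s}$ against the (non-$\sigma$-finite) counting measure on $\Rset$. Concavity holds termwise, and Jensen fails for $\nug=\frac12(\delta_{-X}+\delta_X)$ with $X$ exponential: $H(\nug)=\frac12$, while the Laplace intensity has no atoms. The zero set of that functional is exactly your $Z$ (every atom has mass $0$ or $1$). So your $H$ is $+\infty\cdot\one_{\{H_{\mathrm{paper}}>0\}}$, and your $\nug$ plays the same role. The mechanism is the same in both: $Z$ is a face of $\Pset$ containing the Diracs, yet a nonatomic law is an integral mixture of laws outside $Z$.

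What your route buys:
\begin{itemize}
\item It makes this mechanism explicit, since concavity becomes a one-line face check.
\item It supplies a proof of $\Fcal$-measurability, which the paper simply asserts.
\item It gives an explicit push-forward reduction for the product case, which the paper handles with ``without loss of generality''.
\end{itemize}

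What the paper's route buys is a natural, finite-valued, bounded ($\sup H=1$) functional from a standard family. The authors deliberately want this: they stress after the first counterexample that the issue does not stem from infinite values, and their conclusion advertises a bounded counterexample. Your use of $+\infty$ is allowed by the statement, but you can get boundedness at no cost. The functional $\one_{Z^c}$ is also concave: when the mixture lies outside $Z$, the left-hand side is $1\ge\lambda H(\nu_1)+(1-\lambda)H(\nu_2)$, and nothing else in your argument changes.

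One minor point: tightness is not needed in your measurability step. At most $1/\varepsilon$ cells per level are heavy, and continuity from above along a nested branch of half-open dyadic cells already gives a nonempty, hence singleton, intersection of mass at least $\varepsilon$.
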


\begin{proof}
  For both results we can assume without loss of generality
  that~$\left( \Yset', \Ycal' \right)$ is trivial, i.e., that
  $\Ymsp \cong \left( \Nset, \Pcal(\Nset) \right)$ for
  \Cref{thm:main-result-1} and
  $\Ymsp \cong \left( \Rset, \Bcal(\Rset) \right)$ for
  \Cref{thm:main-result-2}. %
  A concave probability functional that does not satisfy Jensen's
  inequality is constructed, for each of the two settings, in
  \Cref{sec:counterexample1,sec:counterexample2} respectively.
\end{proof}

\Cref{thm:main-result-2} proves that the requirement that $H$~should
satisfy Jensen's inequality, in the definition of an uncertainty
functional, cannot be replaced by a simple concavity requirement as
soon as the underlying measurable space $\Ymsp$ ``carries'' a
real-valued random variable (in the sense that
$\Ymsp \cong \left( \Rset, \Bcal(\Rset) \right) \otimes \left( \Yset',
  \Ycal' \right)$, for some measurable
space~$\left( \Yset', \Ycal' \right)$). %
In the case of uncertainty functionals
on~$\left( \Nset, \Pcal(\Nset) \right)$, the question remains open.

\begin{openproblem}
  Is it possible to construct a concave, non-negative, measurable
  probability functional on $\left( \Nset, \Pcal(\Nset) \right)$ that
  vanishes on Dirac measures but does not satisfy Jensen's inequality?
\end{openproblem}

\subsection{Counterexamples for Jensen's inequality}\label{sec:counterexample}

\subsubsection{%
  A first counterexample (proof of %
  \texorpdfstring{%
    \Cref{thm:main-result-1}%
  }{Theorem~\ref{thm:main-result-1}})}\label{sec:counterexample1}

In this section, we present a counterexample adapted from
\cite{perlman74} as a proof of \Cref{thm:main-result-1}. %
Take $\Yset = \Nset$, and let $H$ denote the real-valued probability
functional defined by
\begin{equation}\label{eq:CE1}
  H(\nu) =
  \begin{cases}
    g(\nu)  &\text{ if } g(\nu) >-\infty, \\
    \;\;0 &\text{ if } g(\nu)=-\infty,
  \end{cases}
  \quad \text{where } g(\nu) = \liminf_{y \to+\infty} \left( - y^2 \nu_y \right).
\end{equation}
Note that $H$~takes values in~$\left( -\infty,\, 0 \right]$.

We begin by establishing concavity. For any $\nu,\nu' \in \Pset$ and
$\lambda\in(0,1)$, if $g\prt{\lambda\nu+(1-\lambda)\nu'}=-\infty$,
we have $H(\lambda\nu + (1-\lambda)\nu') = 0$ which reaches the
maximum of $H$, and thus clearly satisfies the desired inequality. In
the case where the $g\prt{\lambda\nu+(1-\lambda)\nu'}$ is finite, we have
\begin{equation*}
  \begin{split}
    -\infty \;<\; H(\lambda\nu + (1-\lambda)\nu')
    & \;=\; \liminf_{y\to+\infty} 
      \left[ -y^2 \prt{\lambda\nu_y + (1-\lambda) \nu'_y} \right]\\
    & \;\leq\; \min\left\{%
      \lambda \liminf_{y\to+\infty} ( -y^2 \nu_y),\;
      (1-\lambda) \liminf_{y\to+\infty} (-y^2 \nu'_y)
      \right\},
  \end{split}
\end{equation*}
which implies that $g(\nu)$ and $g(\nu')$ are both finite,
indicating that we have both $H(\nu)=g(\nu)$ and
$H(\nu')=g(\nu')$. %
Hence, the superadditivity and positive homogeneity of the $\liminf$
operator imply that
\begin{equation*}
  \begin{split}
    H(\lambda\nu + (1-\lambda)\nu')
    & \;=\; g(\lambda\nu + (1-\lambda)\nu') \\
    & \;\geq\; \lambda g(\nu) + (1-\lambda)g(\nu') = \lambda H(\nu) + (1-\lambda)H(\nu').
  \end{split}        
\end{equation*}
	
To see that Jensen's inequality does not hold for this functional,
consider a discrete random variable~$X$ taking values in~$\Nset$,
with distribution
\begin{equation*}
  \Prob\prt{X=x} = \frac{6}{\pi^2} \frac{1}{(1+x)^2},\quad x \in \Nset,
\end{equation*}
and the random measure $\nug = \delta_{X }$,
where $\delta_x$ denotes the Dirac measure at~$x$. %
In this setting we have
\begin{equation*}
  H\prt{\nub}
  = H\prt{\Prob^X}
  = \frac{6}{\pi^2}\liminf_{y \to +\infty} \left( -\frac{y^2}{(1 + y)^2}\right)
  = -\frac{6}{\pi^2},
\end{equation*}
and $H\prt{\nug} = 0$ since $H$~vanishes on Dirac measures. %
Therefore %
$H\prt{\nub} < \Esp\left( H\prt{\nug} \right)$, which contradicts
Jensen's inequality.

\begin{remark}
  In fact, the function $g$ alone is sufficient to show \Cref{thm:main-result-1}.
  Its range covers the extended non-positive real line, whereas the
  range of $H$ is essentially the same, except that it excludes
  $-\infty$. This construction is intended to highlight that the 
  issue does not stem from the infinite value. %
\end{remark}

\subsubsection{%
  Another counterexample (proof of %
  \texorpdfstring{\Cref{thm:main-result-2}}{Theorem~7})}\label{sec:counterexample2}

In this section, we construct a concave probability functional that
satisfies all the requirements of an uncertainty functional except
Jensen's inequality, thereby proving \Cref{thm:main-result-2}.

Let $\Yset = \Rset$ and $\Ycal = \Bcal(\Rset)$. %
Consider the integrated variance functional defined by
\begin{equation}\label{eq:CE2}
  H(\nu) \;=\; \int_\Rset \var_\nu\left( \one_{Y=s} \right)\, \mu(\ds),
\end{equation}
where $\mu$ denotes the counting measure on~$(\Rset, \Bcal(\Rset))$. %
The probability functional $H$~is measurable, nonnegative, and
vanishes on Dirac measures. %
It is also concave, since for each~$s \in \Rset$ the mapping
$\nu \mapsto \var_\nu \left( \one_{Y=s} \right)$ is concave (and in
fact satisfies Jensen's inequality).

To see that Jensen's inequality does not hold for~$H$, observe that
\begin{equation}\label{eq:CE2b}
  H(\nu) \;=\; \int_\Rset \nu_s\, \left( 1 - \nu_s \right)\, \mu(\ds)
  \;=\; \sum_{s \in \Rset} \nu_s\, \left( 1 - \nu_s \right),
\end{equation}
where $\nu_s = \nu\left( \{ s \} \right)$ as before. %
Consider the random probability measure
$\nug = \frac{1}{2}\left( \delta_{-X} + \delta_X \right)$, where $X$
follows an exponential distribution. %
Then $\nub$ is a Laplace distribution, which has no atom and hence
$H(\nub) = 0$. %
On the other hand, $H(\nug) = \frac{1}{2}$ almost surely. %
Therefore Jensen's inequality does not hold:
\begin{equation*}
  \Esp\left(H\prt{\nug} \right) = \frac{1}{2} > 0 = H(\nub).  
\end{equation*}

\begin{remark}
  The essence of this counterexample is the lack of
  $\sigma$-finiteness of the counting measure~$\mu$ on~$\Rset$. %
  Indeed, as mentioned in \Cref{sec:regular-QUFs}, any probability
  functional of the form~\eqref{eq:CE2} with $\mu$ a $\sigma$-finite
  measure is an uncertainty functional, which is an easy consequence
  of Fubini--Tonelli's theorem:
  \begin{align*}
    H(\nub)
    & \;=\; \int_\Rset \var_{\nub}\left( \one_{Y=s} \right)\, \mu(\ds)
      \;\ge\; \int_\Rset \Esp\left( \var_{\nug}\left( \one_{Y=s} \right)\right)\, \mu(\ds) \\
    & \;=\; \Esp\left( \int_\Rset \var_{\nug}\left( \one_{Y=s} \right)\, \mu(\ds) \right)
      \;=\; \Esp\left( H(\nug) \right).
  \end{align*}
\end{remark}

\begin{remark}
  It is easy to see from \Cref{eq:CE2b} that $H$~is upper-bounded by
  one. %
  More precisely, $\sup H = 1$ and the sup is not attained.
\end{remark}

\subsection{A sufficient condition, and more open problems}
\label{sec:regular-QUFs2}

We focus in this section on non-negative probability functionals. %
A sufficient condition for Jensen's inequality to hold in this setting
is provided by the following result.

\begin{proposition}[see, e.g., %
  \citealp{bect:2019:supermartingale}, Proposition 3.17]%
  \label{prop:expected-loss}
  Let $\Dset$ be a non-empty set. %
  Let $L: \Yset \times \Dset \to \Rpbar$ be a function such that, for
  all $d \in \Dset$, $L(\cdot, d)$ is measurable. %
  Then, the expression
  \begin{equation}
    \label{equ:expr-H-expected-loss}
    H(\nu) = \inf_{d \in \Dset} \int L(y,d)\, \nu(\dy),
    \quad \nu \in \Pset,
  \end{equation}
  defines a non-negative measurable probability functional
  $H:\Pset \to \Rpbar$ that satisfies Jensen's inequality.
\end{proposition}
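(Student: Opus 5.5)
The plan is to check three things in turn: non-negativity, measurability, and Jensen's inequality. Non-negativity is immediate, since $L \ge 0$ makes each integral $\int L(y,d)\,\nu(\dy)$ lie in $\Rpbar$, and so does their infimum.

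Measurability is the delicate point, because $\Dset$ is an arbitrary, possibly uncountable, set. An infimum of uncountably many measurable maps need not be measurable, so the statement must be handled with care here.

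For each fixed $d$, the map $\nu \mapsto \int L(y,d)\,\nu(\dy)$ is $\Fcal$-measurable. To see this, approximate $L(\cdot,d)$ from below by simple functions $\sum_i a_i \one_{B_i}$ with $B_i \in \Ycal$. Each $\nu \mapsto \sum_i a_i \pi_{B_i}(\nu)$ is measurable, and by monotone convergence the integral is a pointwise increasing limit of these.

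The infimum over $\Dset$ is then the main obstacle. I would first look at how the cited reference (Proposition~3.17 of \citealp{bect:2019:supermartingale}) treats it, and in particular whether measurability is part of the intended claim or an implicit assumption. If $\Dset$ can be replaced by a countable subset without changing the infimum (e.g., under a separability or continuity assumption in $d$), measurability follows at once. In general, I expect the honest route is either to add such an assumption or to note that Jensen's inequality can be stated with outer or inner expectations. Since \Cref{def:satisfy-Jensen} presupposes measurability, I would flag this step explicitly and, failing a countable reduction, restrict the claim accordingly.

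Jensen's inequality itself is straightforward. Let $\nug$ be a random probability measure. For each fixed $d \in \Dset$, apply Fubini--Tonelli (everything is non-negative) to the kernel $(\omega,B)\mapsto \nug(\omega,B)$:
\begin{equation*}
  \int L(y,d)\,\nub(\dy) \;=\; \Esp\left( \int L(y,d)\,\nug(\dy) \right) \;\ge\; \Esp\left( H(\nug) \right).
\end{equation*}
The first equality is the identity $\int f\,\dnub = \Esp \int f\,\dnug$, valid for indicators by the definition of $\nub$ and extended to non-negative measurable $f$ by monotone convergence. The inequality uses $H(\nug) \le \int L(y,d)\,\nug(\dy)$ pointwise and monotonicity of expectation; quasi-integrability is automatic since $H \ge 0$.

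Finally, take the infimum over $d$ on the left-hand side. This gives $H(\nub) \ge \Esp(H(\nug))$, which is \eqref{equ:Jensen-ineq-measures}.
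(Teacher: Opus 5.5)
Your non-negativity argument, the $\Fcal$-measurability of $\nu \mapsto \int L(y,d)\,\nu(\dy)$ for fixed $d$ (simple-function approximation plus monotone convergence), and the identity $\int f\,\dnub = \Esp\bigl( \int f\,\dnug \bigr)$ for non-negative measurable~$f$ are all correct. So is the Jensen step: bound $H(\nug)$ by the fixed-$d$ risk, take expectations, then take the infimum over~$d$. The paper gives no proof of its own beyond the citation. This Fubini--Tonelli-then-infimum argument is the natural one, so there is nothing further to compare on that part.

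Your hesitation about measurability is justified. The measurability claim is false in the stated generality, so no argument can close that step. Take $\Yset = \Rset$, $\Ycal = \Bcal(\Rset)$, a non-empty set $A \subset \Rset$ that is not Borel, $\Dset = A$ and $L(y,d) = \one_{y \neq d}$. Each $L(\cdot, d)$ is Borel, and $H(\nu) = 1 - \sup_{d \in A} \nu(\{d\})$, hence $H(\delta_x) = 1 - \one_A(x)$. Since $\pi_B(\delta_x) = \one_B(x)$ for every $B \in \Ycal$, the map $x \mapsto \delta_x$ is $\Bcal(\Rset)/\Fcal$-measurable. So $\Fcal$-measurability of~$H$ would force $A$ to be Borel.

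The proposition therefore needs an additional hypothesis. For instance, one can assume that $H$ is measurable, or that for every~$\nu$ the infimum can be taken over a fixed countable subset of~$\Dset$ (the reduction you mention). Under any such hypothesis your proof is complete.

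Your argument in fact gives more, with no measurability at all. Let $\Esp^*(Z) = \inf\{ \Esp(W) : W \ge Z,\ W \text{ measurable} \}$ denote the outer expectation. Each $\int L(y,d)\,\nug(\dy)$ is a measurable majorant of $H(\nug)$, so $\Esp^*\bigl( H(\nug) \bigr) \le \int L(y,d)\,\nub(\dy)$ for every~$d$, whence $\Esp^*\bigl( H(\nug) \bigr) \le H(\nub)$. Only measurability is at stake, never the inequality itself.
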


\begin{remark}
  The probability function~\eqref{equ:expr-H-expected-loss} is an
  uncertainty functional in the sense of \Cref{def:UF} if, and only
  if, $\inf L(y, \cdot) = 0$ for all~$y \in \Yset$.
\end{remark}

\begin{remark}
  Sensitivity indices based on probability functionals of the
  form~\eqref{equ:expr-H-expected-loss} have been proposed by
  \cite{Fort2016}, \cite{Borgonovo2021}, \cite{Fissler2023}
  and~\cite{Straub2025}.
\end{remark}

Interpreting $d \in \Dset$ as a decision, $L$ as a loss function, and
$\nu$ as a posterior distribution in a Bayesian setting, the integral
in the right-hand-side of \Cref{equ:expr-H-expected-loss} corresponds
to the posterior risk, and the uncertainty~$H(\nu)$ to the minimal
posterior risk. %
This idea of measuring uncertainty using an expected loss---or
information using an expected utility---has a long history, which goes
back to the early days of statistical decision theory \citep[see,
e.g.,][in particular Eq.~(2.14) and the associated
discussion]{mccarthy:1956:measures, degroot:1962:uncertainty}. %
See also \cite{bernardo:1979:expected},
\cite{degroot:1986:concepts,degroot:1994:changes},
\cite{dawid:2007:geometry} and \cite{ginebra:2007:information}. %
The following question, however, remains to the best of our knowledge
unanswered (see below for a partial answer).

\begin{openproblem}\label{op:inf}
  Does there exist a measurable space $\Ymsp$, and a non-negative
  measurable probability functional~$H$ on~$\Ymsp$, such that
  $H$~satisfies Jensen's inequality but cannot be written in the
  form~\eqref{equ:expr-H-expected-loss}?
\end{openproblem}

As a special case, consider now the situation where the infimum in
\Cref{equ:expr-H-expected-loss} is attained for all~$\nu \in \Pset$. %
It turns out that this corresponds to a well-known class of
uncertainty functionals \citep[see, e.g.,][]{grunwald:2004,
  dawid:2007:geometry, gneiting:2007:strictly}:

\begin{proposition}
  A probability functional~$H$ admits the
  representation~\eqref{equ:expr-H-expected-loss}, where the infimum on~$\Dset$ is
  attained for all~$\nu \in \Pset$ if, and only if, it is the
  \emph{generalized entropy function} associated with a proper, %
  non-negative, negatively-oriented scoring rule
  $S: \Pset \times \Yset \to \Rpbar$.%
\end{proposition}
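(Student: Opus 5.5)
We prove the two implications separately, taking the generalized entropy of a scoring rule in the usual sense: $S: \Pset \times \Yset \to \Rpbar$ is \emph{proper} if $\int S(\nu, y)\, \nu(\dy) \le \int S(\nu', y)\, \nu(\dy)$ for all $\nu, \nu' \in \Pset$, with $S(\nu', \cdot)$ measurable for each $\nu'$. Its generalized entropy is $H(\nu) = \int S(\nu, y)\, \nu(\dy)$.

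\emph{If.} Let $S$ be proper and non-negative, and set $\Dset = \Pset$ and $L(y, d) = S(d, y)$. Each $L(\cdot, d)$ is measurable and $\Rpbar$-valued, as \Cref{prop:expected-loss} requires. Properness says exactly that
\begin{equation*}
  \int L(y, d)\, \nu(\dy) \;\ge\; \int L(y, \nu)\, \nu(\dy) \quad \text{for all } d \in \Dset,
\end{equation*}
so the infimum in \Cref{equ:expr-H-expected-loss} equals $H(\nu)$ and is attained at $d = \nu$.

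\emph{Only if.} Assume $H(\nu) = \inf_{d \in \Dset} \int L(y, d)\, \nu(\dy)$ with $L \ge 0$ and the infimum attained for every $\nu$.
\begin{enumerate}[1) ]
\item By the axiom of choice, select for each $\nu \in \Pset$ a minimizer $d^*(\nu) \in \Dset$.
\item Define $S(\nu, y) = L\bigl(y, d^*(\nu)\bigr)$. This is non-negative, and $S(\nu, \cdot)$ is measurable for each fixed $\nu$. That is the only measurability a scoring rule needs, so the possibly non-measurable selection $d^*$ does no harm.
\item The generalized entropy of $S$ is $\int S(\nu, y)\, \nu(\dy) = \int L\bigl(y, d^*(\nu)\bigr)\, \nu(\dy) = H(\nu)$.
\item For properness, fix $\nu, \nu' \in \Pset$. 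Since $d^*(\nu)$ minimizes the $\nu$-expected loss over all of $\Dset$, in particular over $d = d^*(\nu')$,
\begin{equation*}
  \int S(\nu, y)\, \nu(\dy) \;=\; \inf_{d \in \Dset} \int L(y, d)\, \nu(\dy) \;\le\; \int L\bigl(y, d^*(\nu')\bigr)\, \nu(\dy) \;=\; \int S(\nu', y)\, \nu(\dy).
\end{equation*}
\end{enumerate}

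Both directions are therefore essentially definitional. The only point needing care is handling infinite values: properness is an inequality in $\Rpbar$, and non-negativity keeps every integral well defined, so no $\infty - \infty$ issue arises.
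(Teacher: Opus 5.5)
Your proof is correct and follows essentially the same route as the paper, which reproduces Grünwald's argument. One direction selects a minimizer $d^*_\nu$ for each $\nu$ and sets $S(\nu,y) = L(y,d^*_\nu)$; the other takes $\Dset = \Pset$ and $L = S$. Your explicit remarks on the use of choice and on the measurability of $S(\nu,\cdot)$ make precise what the paper leaves implicit.
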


\begin{proof}
  We reproduce the argument from \citet[Section~3.4]{grunwald:2004}.

  Assume that $H$ has the form~\eqref{equ:expr-H-expected-loss}, and
  the infimum is attained for all~$\nu \in \Pset$. %
  Then, there is a function $\nu \mapsto d_\nu^*$ from~$\Pset$
  to~$\Dset$ such that, for all $\nu \in \Pset$,
  \begin{equation}\label{equ:H-min-scoring}
    H(\nu) \;=\; \int L(y, d_\nu^*)\, \nu(\dy)
    \;=\; \min_{\nu' \in \Pset} \int L(y, d_{\nu'}^*)\, \nu(\dy).
  \end{equation}
  Setting $S(y, \nu) = L(y, d^*_\nu)$ yields the desired
  negatively-oriented scoring rule, which takes values
  in~$\Rpbar$, and is proper since the minimum in
  \Cref{equ:H-min-scoring} is attained at $\nu' = \nu$.

  Conversely, assume that there exists a proper, non-negative, negatively-oriented
  scoring rule $S: \Pset \times \Yset \to \Rpbar$
  such that, for all $\nu \in \Pset$,
  \begin{equation*}
    H(\nu)
    \;=\; \int S(y, \nu)\, \nu(\dy)
    \;=\; \min_{\nu' \in \Pset} \int S(y, \nu')\, \nu(\dy).
  \end{equation*}
  Then the representation~\eqref{equ:expr-H-expected-loss} holds with
  $\Dset = \Pset$ and $L = S$.
\end{proof}

\begin{openproblem}\label{op:min}
  Does there exist a measurable space $\Ymsp$, and a non-negative
  measurable probability functional~$H$ on~$\Ymsp$, such that
  $H$~can be written in the
  form~\eqref{equ:expr-H-expected-loss}, but does not have an 
  associated proper scoring rule?
\end{openproblem}

We conclude this section with a result that provides a partial answer
to \Cref{op:inf,op:min}: the answer to both questions is negative in
the case of a finite set~$\Yset$.

\begin{theorem}\label{thm:min-finite-case}
  Let $\Yset$ be a finite set, endowed with a
  $\sigma$-algebra~$\Ycal$. %
  Let $H$ denote a non-negative, concave probability
  functional on~$\Ymsp$. %
  Then, $H$~can be written in the
  form~\eqref{equ:expr-H-expected-loss}, in such a way that the
  infimum is attained for all~$\nu \in \Pset$. %
\end{theorem}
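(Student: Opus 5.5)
The plan is to take the decision set to be $\Dset = \Pset$ itself. For each $\nu \in \Pset$ I would build a loss $L(\cdot, \nu) : \Yset \to \Rpbar$ such that the affine functional $\ell_\nu(\nu') = \int L(y,\nu)\, \nu'(\dy)$ dominates $H$ on all of $\Pset$ and touches it at $\nu$, that is, $\ell_\nu(\nu) = H(\nu)$. Once this is done, for every $\nu'$ we get
$$H(\nu') \le \inf_{d} \ell_d(\nu') \le \ell_{\nu'}(\nu') = H(\nu'),$$
so \eqref{equ:expr-H-expected-loss} holds and the infimum is attained at $d = \nu'$.

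\textbf{Reduction to the discrete case.} First I would pass from $\Ycal$ to its atoms. Since $\Yset$ is finite, $\Ycal$ is generated by a finite partition into atoms $A_1, \dots, A_n$. Every measurable $L(\cdot, d)$ is constant on atoms, and $\Pset$ is identified with the full simplex $\Delta_n \subset \Rset^n$ via $\nu \mapsto (\nu(A_i))_i$. So it suffices to treat $\Yset = \{1, \dots, n\}$ with $\Ycal = \Pcal(\Yset)$. Throughout I use the convention $0 \cdot (+\infty) = 0$.

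\textbf{Case $H(\nu) = +\infty$.} Set $L(\cdot, \nu) \equiv +\infty$. Then $\ell_\nu(\nu') = +\infty$ for every $\nu'$, which trivially dominates $H$ and equals $H(\nu)$ at $\nu' = \nu$.

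\textbf{Case $H(\nu) < +\infty$.} Let $S = \operatorname{supp} \nu$ and let $F_S = \{\nu' : \nu'_y = 0 \text{ for all } y \notin S\}$ be the face of the simplex containing $\nu$, so that $\nu \in \ri F_S$. The construction has three steps.

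\begin{enumerate}[i) ]
\item $H$ is finite on $F_S$. Suppose $H(\nu') = +\infty$ for some $\nu' \in F_S$. Because $\nu$ is relatively interior, the segment from $\nu'$ through $\nu$ extends slightly beyond $\nu$ to some $\nu'' \in F_S$, so $\nu = \lambda \nu' + (1-\lambda)\nu''$ with $\lambda \in (0,1)$. Since $H(\nu'') \ge 0 > -\infty$, concavity forces $H(\nu) \ge \lambda \cdot (+\infty) + (1-\lambda) H(\nu'') = +\infty$, a contradiction.
\item Existence of a supergradient. $H$ is a finite concave function on the finite-dimensional convex set $F_S$, and $\nu$ lies in its relative interior. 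Hence there is an affine function $a$ on $\operatorname{aff} F_S$ with $a(\nu) = H(\nu)$ and $a \ge H$ near $\nu$. The segment argument from concavity then extends $a \ge H$ to all of $F_S$, including its relative boundary.
\item Definition of the loss. Because the coordinates of points of $F_S$ sum to one, $a$ can be written as $a(\nu') = \sum_{y \in S} c_y \nu'_y$. Each coefficient satisfies $c_y = a(\delta_y) \ge H(\delta_y) \ge 0$, so they are non-negative. Set $L(y, \nu) = c_y$ for $y \in S$ and $L(y, \nu) = +\infty$ for $y \notin S$.
\end{enumerate}

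\textbf{Verification.} If $\nu' \in F_S$, then $\ell_\nu(\nu') = a(\nu') \ge H(\nu')$. Otherwise $\nu'$ puts positive mass on some $y \notin S$, so $\ell_\nu(\nu') = +\infty \ge H(\nu')$. At $\nu$ itself, $\ell_\nu(\nu) = H(\nu)$. Finally $L \ge 0$, and measurability of $L(\cdot, d)$ is automatic on a discrete space.

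\textbf{Main obstacle.} The delicate step is the existence of a supporting affine function with non-negative coefficients when $\nu$ lies on the boundary of the simplex. There a concave function such as $-p\log p$ can have infinite slope, so no finite supergradient exists on the whole simplex. The remedy is to work on the face $F_S$ and let $L = +\infty$ off the support. This in turn relies on the finiteness of $H$ on that face, established in step i).
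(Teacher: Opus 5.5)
Your proof is correct and uses the same two ingredients as the paper's. The first is an affine majorant touching $H$ at a relative-interior point of a face, with non-negative coefficients because $H(\delta_y) \ge 0$. The second is an infinite loss on the coordinates outside that face; your finiteness step on $F_S$ is the pointwise form of the paper's observation that $H = +\infty$ somewhere forces $H = +\infty$ on the relative interior.

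The only difference is organizational. You go straight to the face $F_S$ having $\nu$ in its relative interior and take $\Dset = \Pset$. The paper instead reaches the lower-dimensional faces by induction on the number of atoms: it applies its lemma on the interior and the induction hypothesis on the facets $F_k$. Your version is an induction-free rendering of the same construction.
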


\begin{proof}
  See \Cref{sec:proof-app:min-finite-case}.
\end{proof}

\begin{remark}
  It is critical, for this result to hold, to allow the
  value~$+\infty$ for the function~$L$
  in~\eqref{equ:expr-H-expected-loss}. %
  A special case of this result is proved by
  \cite{degroot:1962:uncertainty}, under the additional assumption
  that $H$, seen as a function on the probability simplex, is
  continuous. %
  In this case, $L$ can be chosen to take only finite values, and the
  result is a straightforward consequence of a classical result in
  convex analysis---namely, that a continuous concave function on a
  convex subset of~$\Rset^n$ admits at each point an exact affine
  majorant.
\end{remark}

\section{Conclusion}\label{sec4}

In this article, we have provided a unified theoretical treatment of
uncertainty functionals on general measurable spaces, going beyond
DeGroot's seminal work on finite spaces. %
We have proved that concavity is not, in general, a sufficient
condition for Jensen's inequality to hold in its probabilistic
form---a key property in applications such as Bayesian experimental
design and global sensitivity analysis. %
In particular, our second counterexample demonstrates that Jensen's
inequality can fail to hold even for non-negative bounded concave
functionals that vanish on Dirac measures. %

Several important questions remain open. %
In particular, to further clarify the theory of uncertainty
functionals, it is important to understand whether all uncertainty
functionals arise as the generalized entropy function of some proper
scoring rule. %
Another important direction for future research is to refine the
analysis of uncertainty functionals in the context of global
sensitivity analysis. %
Indeed, the property of being decreasing on average is necessary but
not sufficient to yield a complete set of sensitivity indices à la
Sobol', in which interactions are separated from main effects in a
meaningful way.

\backmatter

\begin{appendices}

\section{Proofs}\label{sec:appendix:proofs}

\subsection{Proof of \texorpdfstring{\Cref{thm:regular-QUF}}{Theorem~\ref{thm:regular-QUF}}}
\label{sec:proof-app:regular-QUF}


\begin{theorem}[see, e.g., \citealp{berg84harmonic}, Chapter~3]
  \label{thm:CND-kernel}
  Let $\rho:\Yset \times \Yset \to \Rset$ be symmetric and vanishing
  on the diagonal. %
  Then, $\rho$ is CND if, and only if, there exists a Hilbert
  space~$\Hset$ and a function 
  $\psi: \Yset \to \Hset$, such that
  $\rho(y, y') = \lVert \psi(y) - \psi(y') \rVert^2$ for all $y$,
  $y' \in \Yset$.
\end{theorem}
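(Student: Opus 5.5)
We prove the two implications separately. The elementary direction ($\Leftarrow$) is a direct construction. Assume $\rho(y,y') = \lVert \psi(y) - \psi(y') \rVert^2$ for some Hilbert space $\Hset$ and map $\psi:\Yset\to\Hset$. Symmetry and vanishing on the diagonal are then immediate. For conditional negative definiteness, take $y_1,\ldots,y_m$ and $\lambda_1,\ldots,\lambda_m$ with $\sum_k \lambda_k = 0$, and expand
\begin{equation*}
  \lVert \psi(y_k) - \psi(y_l) \rVert^2 = \lVert \psi(y_k)\rVert^2 + \lVert \psi(y_l)\rVert^2 - 2\langle \psi(y_k), \psi(y_l)\rangle .
\end{equation*}
In the double sum, the two norm terms each carry a factor $\sum_l \lambda_l$ or $\sum_k \lambda_k$, so they vanish. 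What remains is
\begin{equation*}
  \sum_{k,l} \lambda_k\lambda_l\, \rho(y_k,y_l) = -2\,\Bigl\lVert \sum_k \lambda_k\, \psi(y_k) \Bigr\rVert^2 \le 0 .
\end{equation*}

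For the direction ($\Rightarrow$), we use the Schoenberg-type construction with a fixed base point. Fix $y_0\in\Yset$ and define
\begin{equation*}
  k(y,y') = \tfrac12\bigl(\rho(y,y_0) + \rho(y',y_0) - \rho(y,y')\bigr).
\end{equation*}

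The first step is to show that $k$ is positive semidefinite. Given arbitrary $y_1,\ldots,y_m$ and arbitrary real $c_1,\ldots,c_m$, adjoin the point $y_0$ with coefficient $c_0 = -\sum_{k\ge1} c_k$, so that the coefficients sum to zero. Applying CND to this augmented family, and using $\rho(y_0,y_0)=0$ and symmetry, a direct expansion should give
\begin{equation*}
  \sum_{k,l\ge1} c_k c_l\, k(y_k,y_l) = -\tfrac12 \sum_{k,l\ge0} c_k c_l\, \rho(y_k,y_l) \ge 0 .
\end{equation*}
This bookkeeping is the only real computation in the proof, and it is where sign or factor-of-two errors are most likely, so I would check it carefully.

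Next, since $k$ is symmetric and positive semidefinite, the Moore--Aronszajn theorem gives a reproducing kernel Hilbert space $\Hset$ with kernel $k$. We set $\psi(y) = k(y,\cdot)$. Expanding and using the reproducing property,
\begin{equation*}
  \lVert \psi(y)-\psi(y')\rVert^2 = k(y,y) + k(y',y') - 2k(y,y').
\end{equation*}
From the definition of $k$ and $\rho(y,y)=0$ we get $k(y,y) = \rho(y,y_0)$. Substituting, the terms involving $y_0$ cancel and we are left with exactly $\rho(y,y')$, which completes the proof. The case of empty $\Yset$ is trivial and can be set aside.
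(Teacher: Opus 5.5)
Your proof is correct and matches what the paper relies on: the paper defers to Berg et al.\ but sketches, in \Cref{rem:RKHS-feat-map}, exactly your base-point kernel $k(y,y') = \frac{1}{2}\bigl(\rho(y,y_0)+\rho(y',y_0)-\rho(y,y')\bigr)$ with feature map $\psi(y)=k(y,\cdot)$ in its RKHS, and your expansion for the converse direction is the same computation used in the proof of \Cref{prop:CIND-equiv-CND}. The bookkeeping you flagged checks out: both sides of your positive-semidefiniteness identity equal $\bigl(\sum_{l\ge1} c_l\bigr)\sum_{k\ge1} c_k\,\rho(y_k,y_0) - \frac{1}{2}\sum_{k,l\ge1} c_k c_l\,\rho(y_k,y_l)$.
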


We will consider in Appendices~\ref{sec:proof-app:regular-QUF}
and~\ref{sec:proof-app:CIND-equiv-CND} a measurable kernel~$\rho$, a
Hilbert space~$\Hset$ and a function~$\psi: \Yset \to \Hset$ with the
same properties as in \Cref{thm:CND-kernel}, which we summarize for
reference:%

\begin{assumption}\label{assumpt:rho-CND}
  $\rho:\Yset \times \Yset \to \Rset$ is a symmetric and measurable
  CND kernel, vanishing on the diagonal, %
  and $\psi: \Yset \to \Hset$ is such that
  $\rho(y, y') = \lVert \psi(y) - \psi(y') \rVert^2$ for all $y$,
  $y' \in \Yset$.%
\end{assumption}

\begin{remark}\label{rem:RKHS-feat-map}
  Given a kernel~$\rho$ that satisfies \Cref{assumpt:rho-CND}, a
  corresponding function $\psi: \Yset \to \Hset$ for~$\rho$ is obtained, for
  instance, by considering a reproducing kernel~$\Hset$ Hilbert space
  on~$\Yset$, with symmetric positive definite kernel~$k$ such that
  $\rho(y, y') = k(y, y) + k(y', y') - 2 k(y, y')$, and setting
  $\psi(y) = k(y, \cdot)$. %
  It is well known that such a kernel~$k$ always exists if~$\rho$ is
  symmetric, CND and vanishes on the diagonal: %
  for instance, take
  $k(y, y') = \frac{1}{2} \left( \rho(y, y_0) + \rho(y', y_0) - \rho(y, y') \right)$ for some
  arbitrary $y_0 \in \Yset$. %
  (Note that $k$~is measurable.)%
\end{remark}

\begin{lemma}\label{lem:regular-QUF:weak-meas}
  Let \Cref{assumpt:rho-CND} hold. %
  Then $\psi$ is weakly measurable, i.e.,
  $y \mapsto \left< h,\, \psi(y) \right>$ is measurable for
  all~$h \in \Hset$.%
\end{lemma}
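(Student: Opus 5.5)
We need to show that $y \mapsto \left< h,\, \psi(y) \right>$ is measurable for every $h \in \Hset$. The plan is to first handle $h$ in the closed linear span of the differences $\psi(y') - \psi(y_0)$, and then reduce the general case to that one.

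\textbf{Step 1 (generating vectors).} Fix an arbitrary $y_0 \in \Yset$. By polarization,
\begin{equation*}
  \left< \psi(y) - \psi(y_0),\, \psi(y') - \psi(y_0) \right>
  = \tfrac12 \left( \rho(y, y_0) + \rho(y', y_0) - \rho(y, y') \right),
\end{equation*}
which is exactly the kernel $k(y, y')$ of \Cref{rem:RKHS-feat-map}. For fixed $y'$, this is a measurable function of $y$, since $\rho$ is jointly measurable and therefore has measurable sections. Moreover $y \mapsto \left< \psi(y_0),\, h \right>$ is constant. It follows that $y \mapsto \left< \psi(y),\, h \right>$ is measurable whenever $h$ is a finite linear combination of vectors $\psi(y') - \psi(y_0)$. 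The same conclusion holds for $h = \psi(y_0)$ itself, and hence for every $h$ in the linear span $V$ of $\{ \psi(y') : y' \in \Yset \}$.

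\textbf{Step 2 (closure).} Let $\overline{V}$ be the closure of $V$, and take $h \in \overline{V}$ with $h_n \in V$, $h_n \to h$ in norm. Then for each fixed $y$,
\begin{equation*}
  \left< h_n,\, \psi(y) \right> \to \left< h,\, \psi(y) \right>.
\end{equation*}
So $y \mapsto \left< h,\, \psi(y) \right>$ is a pointwise limit of measurable real functions, and is therefore measurable.

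\textbf{Step 3 (orthogonal complement).} For arbitrary $h \in \Hset$, decompose $h = h_1 + h_2$ with $h_1 \in \overline{V}$ and $h_2 \in \overline{V}^{\perp}$. Every $\psi(y)$ lies in $V \subset \overline{V}$, so $\left< h_2,\, \psi(y) \right> = 0$ for all $y$. Hence $\left< h,\, \psi(y) \right> = \left< h_1,\, \psi(y) \right>$, which is measurable by Step 2.

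\textbf{Main obstacle.} The only real subtlety is Step 1. We only know $\rho$, not $\psi$, so the inner products must be recovered from $\rho$ alone. This is done by the polarization identity, and measurability of the sections $y \mapsto \rho(y, y')$ follows from the joint measurability of $\rho$ assumed in \Cref{assumpt:rho-CND}. Working relative to a base point $y_0$ is needed because $\left< \psi(y),\, \psi(y') \right>$ itself is not determined by $\rho$; only inner products of differences are. The constant term $\left< \psi(y_0),\, h \right>$ absorbs this ambiguity.
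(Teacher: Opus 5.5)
\textbf{The gap.} The sentence ``The same conclusion holds for $h=\psi(y_0)$ itself'' has no justification, and Steps 2 and 3 both depend on it. They work with $V=\vect\{\psi(y') : y'\in\Yset\}$ rather than with $W=\vect\{\psi(y')-\psi(y_0) : y'\in\Yset\}$, so they need this case. For $h=\psi(y_0)$ you need measurability of $y\mapsto\langle\psi(y),\psi(y_0)\rangle$. Since
\[
  \langle \psi(y),\, \psi(y_0) \rangle
  \;=\; \tfrac12 \bigl( \lVert \psi(y) \rVert^2 + \lVert \psi(y_0) \rVert^2 - \rho(y, y_0) \bigr),
\]
this is equivalent to measurability of $y\mapsto\lVert\psi(y)\rVert^2$. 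That is exactly the kind of quantity your own closing paragraph says is not determined by $\rho$. Measurability for $h\in W$ does not carry over to $\psi(y_0)$, because $\psi(y_0)$ need not lie in $\overline{W}$. The constancy of $y\mapsto\langle\psi(y_0),h\rangle$ for a fixed $h$, used in Step 1, is a different statement and does not help here.

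\textbf{The repair.} Run Steps 2 and 3 with $W$ in place of $V$.
\begin{enumerate}[i) ]
\item Step 2 then gives measurability for every $h\in\overline{W}$.
\item For $h\in\overline{W}^{\perp}$ you get $\langle h,\psi(y)\rangle=\langle h,\psi(y_0)\rangle+\langle h,\psi(y)-\psi(y_0)\rangle=\langle h,\psi(y_0)\rangle$. This is constant in $y$ rather than zero, and constant is enough.
\item Writing $h=h_1+h_2$ with $h_1\in\overline{W}$ and $h_2\in\overline{W}^{\perp}$ completes the proof. The case $h=\psi(y_0)$ then follows a posteriori.
\end{enumerate}

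\textbf{Comparison with the paper.} Once corrected, your argument is organized differently from the paper's. The paper first treats the RKHS representation $\psi(y)=k(y,\cdot)$ of \Cref{rem:RKHS-feat-map}. There the vectors $k(y,\cdot)$ span a dense subspace, so no orthogonal-complement step is needed. It then transfers the result to an arbitrary $\psi$ through the isometry between the closed spans of differences, a step it only sketches as ``easy to see''. Your corrected version works with an arbitrary $\psi$ from the start, and in effect carries out that transfer explicitly.
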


\begin{proof}
  Assume first that $\Hset$ and $\psi$ are as in
  \Cref{rem:RKHS-feat-map}. %
  Then, for all $h \in \Hset$ of the form $h = k(y_0, \cdot)$, we have
  $\left< h,\, \psi(y) \right> = k(y_0, y)$, and therefore
  $y \mapsto \left< h,\, \psi(y) \right>$ is measurable since $k$~is
  measurable. %
  The result extends to linear combinations, and then to
  all~$h \in \Hset$ by density. %
  Therefore, $\psi$~is weakly measurable.

  Assume now that $\tilde \psi: \Yset \to \tilde\Hset$ provides
  another representation of the same kernel~$\rho$. %
  Then there exists an isometric isomorphism between the closure
  of~$\vect\left\{ \psi(y) - \psi(y_0),\, y \in \Yset \right\}$
  in~$\Hset$ and the closure
  of~$\vect\left\{ \tilde\psi(y) - \tilde\psi(y_0),\, y \in \Yset
  \right\}$ in~$\tilde\Hset$, from which it is easy to see that
  $\tilde\psi$ is weakly measurable.
\end{proof}

\begin{lemma}\label{lem:regular-QUF:set-PH}
  Assume that \Cref{assumpt:rho-CND} holds, and let $\nu \in \Pset$.
  Then $\iint \rho\, \ddiff(\nu\otimes\nu) < +\infty$ if, and only if,
  $\int \lVert \psi \rVert^2\, \dnu < +\infty$. %
  As a consequence, the set
  $\Pset_H = \left\{ \nu \in \Pset \mid \iint \rho\,
    \ddiff(\nu\otimes\nu) < +\infty \right\}$ is convex.
\end{lemma}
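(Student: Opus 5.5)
The plan is to show that $\iint \rho\,\ddiff(\nu\otimes\nu)$ and $\int \lVert \psi\rVert^2\,\dnu$ are finite together, using the expansion $\rho(y,y') = \lVert \psi(y)\rVert^2 + \lVert \psi(y')\rVert^2 - 2\left<\psi(y),\psi(y')\right>$. Convexity of $\Pset_H$ then follows because $\nu \mapsto \int \lVert\psi\rVert^2\,\dnu$ is affine, so its finiteness set is convex. First, $y \mapsto \lVert \psi(y)-\psi(y_0)\rVert^2 = \rho(y,y_0)$ is measurable for any fixed $y_0$. It is also convenient to note that replacing $\psi$ by $\psi - \psi(y_0)$ leaves $\rho$ unchanged. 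Since $\lVert \psi(y)\rVert^2 \le 2\lVert\psi(y)-\psi(y_0)\rVert^2 + 2\lVert \psi(y_0)\rVert^2$ and conversely, the integrability of $\lVert\psi\rVert^2$ does not depend on this translation. So I may assume $\psi(y_0)=0$, in which case $\lVert\psi(y)\rVert^2 = \rho(y,y_0)$ is measurable.

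For the direction ``$\int \lVert\psi\rVert^2\,\dnu<\infty \Rightarrow$ energy finite'', I will use $\rho(y,y') \le 2\lVert\psi(y)\rVert^2 + 2\lVert\psi(y')\rVert^2$. Integrating against $\nu\otimes\nu$ via Tonelli gives $\iint\rho \le 4\int\lVert\psi\rVert^2\,\dnu<\infty$.

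For the converse, assume $\iint \rho\,\ddiff(\nu\otimes\nu)<\infty$. By Tonelli, $\int \rho(y,y')\,\nu(\dy) < \infty$ for $\nu$-almost every $y'$. Pick such a $y_1$, and use the translation freedom to set $\psi(y_1)=0$. Then $\int\lVert\psi\rVert^2\,\dnu = \int \rho(y,y_1)\,\nu(\dy)<\infty$. This is the entire argument, with no positivity subtleties, since all integrands are non-negative.

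The only delicate point, and the step I expect to need the most care, is bookkeeping. The statement says ``$\int\lVert\psi\rVert^2\,\dnu<\infty$'' for a fixed $\psi$, so the translation step must be made explicit. The elementary inequality $\lVert a\rVert^2\le 2\lVert a-b\rVert^2+2\lVert b\rVert^2$ shows finiteness is invariant under translation by a constant vector, because $\nu$ is a probability measure. Measurability of $\lVert\psi\rVert^2$ for the original $\psi$ also needs a word. I will get it from weak measurability (\Cref{lem:regular-QUF:weak-meas}) together with separability of the closed span of $\psi(\Yset)-\psi(y_0)$ if needed. Alternatively, I will simply note that $\lVert\psi(y)\rVert^2 = \rho(y,y_0) + 2\left<\psi(y)-\psi(y_0),\psi(y_0)\right> + \lVert\psi(y_0)\rVert^2$, which is measurable by \Cref{lem:regular-QUF:weak-meas}. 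Finally, for convexity, take $\nu_1,\nu_2\in\Pset_H$ and $\lambda\in(0,1)$. Then $\int\lVert\psi\rVert^2\,\ddiff(\lambda\nu_1+(1-\lambda)\nu_2) = \lambda\int\lVert\psi\rVert^2\,\dnu_1 + (1-\lambda)\int\lVert\psi\rVert^2\,\dnu_2 < \infty$, so $\lambda\nu_1+(1-\lambda)\nu_2 \in \Pset_H$ by the equivalence just proved.
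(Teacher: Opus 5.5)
Your proof is correct and is essentially the paper's argument. One direction uses the bound $\rho(y,y') \le 2\lVert\psi(y)\rVert^2 + 2\lVert\psi(y')\rVert^2$; the converse uses Tonelli to find $y_0$ with $\int\rho(\cdot,y_0)\,\dnu<+\infty$, then $\lVert a\rVert^2 \le 2\lVert a-b\rVert^2+2\lVert b\rVert^2$, which is all your ``translation'' step amounts to. Your measurability identity $\lVert\psi(y)\rVert^2 = \rho(y,y_0) + 2\left<\psi(y)-\psi(y_0),\,\psi(y_0)\right> + \lVert\psi(y_0)\rVert^2$ is cleaner than the paper's sup over the unit ball and is the version to keep. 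The separability fallback you mention can fail: for $\rho(y,y')=\one_{y\neq y'}$ on $\Rset$, $\psi(\Rset)$ is an uncountable set of points at mutual distance~$1$.
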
  

\begin{proof}
  Preliminary remark: since %
  $\psi$ is weakly measurable by \Cref{lem:regular-QUF:weak-meas}, %
  $y \mapsto \lVert \psi(y) \rVert^2$ is measurable %
  as well, since
  $\lVert \psi(y) \rVert^2 = \sup_{\lVert h \rVert \le 1} \left< h,\,
    \psi(y) \right>$. %

  Let $\nu \in \Pset$ be such %
  that $\int \lVert \psi \rVert^2\, \dnu < +\infty$. %
  Then we have
  $\rho(y, y') \le 2\, \bigl( \lVert \psi(y) \rVert^2 + \lVert
  \psi(y') \rVert^2 \bigr)$ and thus
  \begin{equation*}
    \iint \rho\, \ddiff(\nu\otimes\nu)
    \;\leq\; 4\, \int \lVert \psi \rVert^2\, \dnu \;<\; +\infty.
  \end{equation*}
  Conversely, assume that
  $\iint \rho\, \ddiff(\nu\otimes\nu) < +\infty$. %
  Then, for $\nu$-almost all~$y_0 \in \Yset$, we have
  $\int \rho(y, y_0)\, \nu(\dy) < +\infty$. %
  For any $y_0 \in \Yset$ such that this inequality holds, we get
  \begin{align*}
    \int \lVert \psi \rVert^2\, \dnu
    & \;\le\; 2\, \int \left[ \lVert \psi(y) - \psi(y_0) \rVert^2 +  \lVert \psi(y_0) \rVert^2 \right]\, \nu(\dy)\\
    & \;=\; 2\, \int \rho(y, y_0)\, \nu(\dy) + 2\, \lVert \psi(y_0) \rVert^2 \;<\; +\infty.
      \tag*{\qed}
  \end{align*}
  \let\qed\relax
\end{proof}

\begin{lemma}\label{lem:regular-QUF:BH-on-PH-first}
  Let \Cref{assumpt:rho-CND} hold. Then, %
  for all $\nu_1, \nu_2 \in \Pset_H$,
  $\rho \in L^1(\nu_1 \otimes \nu_2)$.
\end{lemma}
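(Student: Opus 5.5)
The statement asks for $\iint \rho\, \ddiff(\nu_1 \otimes \nu_2) < +\infty$ whenever $\nu_1, \nu_2 \in \Pset_H$. Since $\rho$ is non-negative and measurable, membership in $L^1(\nu_1 \otimes \nu_2)$ is the same as finiteness of this double integral. Measurability of the integrand on $\Yset \times \Yset$ is part of \Cref{assumpt:rho-CND}, so only the integral bound needs proof.

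I would use the Hilbert-space representation $\rho(y, y') = \lVert \psi(y) - \psi(y') \rVert^2$ from \Cref{assumpt:rho-CND}, together with the elementary bound
\begin{equation*}
  \rho(y, y') \;\le\; 2\, \bigl( \lVert \psi(y) \rVert^2 + \lVert \psi(y') \rVert^2 \bigr),
\end{equation*}
which also appears in the proof of \Cref{lem:regular-QUF:set-PH}. The preliminary remark in that proof already shows that $y \mapsto \lVert \psi(y) \rVert^2$ is measurable, using the weak measurability from \Cref{lem:regular-QUF:weak-meas}.

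Integrating the bound with respect to $\nu_1 \otimes \nu_2$ via Tonelli's theorem gives
\begin{equation*}
  \iint \rho\, \ddiff(\nu_1 \otimes \nu_2)
  \;\le\; 2 \int \lVert \psi \rVert^2\, \dnu_1 \,+\, 2 \int \lVert \psi \rVert^2\, \dnu_2 .
\end{equation*}
Both terms on the right are finite by \Cref{lem:regular-QUF:set-PH}, because $\nu_1, \nu_2 \in \Pset_H$. This proves the claim.

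I do not expect a real obstacle. The only point requiring care is measurability of $\lVert \psi \rVert^2$, and that is inherited from the earlier lemma.
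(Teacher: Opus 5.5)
Your proof is correct, but it takes a different route from the paper's. The paper never bounds $\rho$ pointwise. It uses only the convexity of $\Pset_H$ (the ``as a consequence'' part of \Cref{lem:regular-QUF:set-PH}) together with the expansion
\[
  H\bigl( \tfrac{1}{2}(\nu_1 + \nu_2) \bigr)
  \;=\; \tfrac{1}{4} H(\nu_1) + \tfrac{1}{4} H(\nu_2) + \tfrac{1}{2} B(\nu_1, \nu_2),
  \qquad B(\nu_1, \nu_2) = \iint \rho\, \ddiff(\nu_1 \otimes \nu_2).
\]
This expansion holds in $\left[0,+\infty\right]$ for any non-negative symmetric kernel, so finiteness of the left-hand side forces $B(\nu_1,\nu_2) < +\infty$.

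You instead go directly from the moment characterization $\int \lVert \psi \rVert^2\, \dnu < +\infty$ to the cross term. This skips the detour through convexity, which the paper itself derives from that same characterization, and it gives the explicit bound $B(\nu_1,\nu_2) \le 2\int \lVert\psi\rVert^2\, \dnu_1 + 2\int \lVert\psi\rVert^2\, \dnu_2$.

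The paper's version buys a structural point instead. For any non-negative symmetric kernel and $\nu_1,\nu_2 \in \Pset_H$, finiteness of the cross term is equivalent to $\tfrac{1}{2}(\nu_1+\nu_2) \in \Pset_H$. So the CND hypothesis enters only through convexity of $\Pset_H$, and the same expansion is reused in the proof of \Cref{thm:regular-QUF}.

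On the measurability point you flag, the paper's preliminary remark is loosely worded. The supremum over the unit ball gives $\lVert\psi(y)\rVert$, not its square, and it runs over a possibly uncountable set. Nothing breaks, however. With the representation of \Cref{rem:RKHS-feat-map} one has $\lVert\psi(y)\rVert^2 = k(y,y) = \rho(y,y_0)$, which is measurable. Your bound then becomes the purely kernel-level inequality $\rho(y,y') \le 2\rho(y,y_0) + 2\rho(y',y_0)$.
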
  

\begin{proof}
  Let $\nu_1, \nu_2 \in \Pset_H$. Set
  \begin{equation*}
    B(\nu_1, \nu_2)
    \;=\; \iint_{\Yset \times \Yset} \rho(y,y')\, \nu_1(\dy)\, \nu_2(\dy')
    \;\in\; \left[ 0, +\infty \right],
  \end{equation*}
  and note that $H(\nu) = B(\nu, \nu)$. %
  The set $\Pset_H$ is convex by \Cref{lem:regular-QUF:set-PH}, and
  therefore $H\left( \frac{1}{2}(\nu_1 + \nu_2) \right) < +\infty$ for
  all $\nu_1, \nu_2 \in \Pset_H$. %
  Besides, $B$ is symmetric since~$\rho$ is symmetric. %
  Thus
  \begin{align*}
    H\left( \frac{\nu_1 + \nu_2}{2} \right)
    & \;=\; \iint \rho\, \ddiff\left[
      \frac{1}{4}\, \nu_1 \otimes \nu_1 + \frac{1}{4}\, \nu_1 \otimes \nu_2
      + \frac{1}{4}\, \nu_2 \otimes \nu_1 + \frac{1}{4}\, \nu_2 \otimes \nu_2
      \right]\\
    & \;=\; \frac{1}{4}\, H(\nu_1) + \frac{1}{4}\, H(\nu_2) + \frac{1}{2}\, B(\nu_1, \nu_2),
  \end{align*}
  which proves that $B(\nu_1, \nu_2)$ is finite.
\end{proof}

\bigbreak

\begin{proof}[Proof of \Cref{thm:regular-QUF}]
  To begin with, observe that $H(\delta_y) = \rho(y, y)$ for
  all~$y \in \Yset$. %
  Thus, it is clear that $H$~vanishes on Dirac measures if, and only
  if, its kernel~$\rho$ vanishes on the diagonal. %
  We assume from now on that $\rho: \Yset \times \Yset \to \Rpbar$ is
  symmetric, measurable, and vanishes on the diagonal.

  Assume that $H$~is an uncertainty functional, and therefore
  satisfies Jensen's inequality. %
  Then, in particular, $H$~is concave by \Cref{thm:degroot62}, %
  which implies that $\rho$ is conditionally negative definite
  (CND). %
  To see it, set $\nu = \sum_{k=1}^m \lambda_k \delta_{y_k}$. %
  Assume without loss of generality that~$\nu \neq 0$. %
  Then there exists finitely supported probability measures $\nu_1$,
  $\nu_2 \in \Pset_H$, and $\alpha > 0$, such that
  $\nu = \alpha (\nu_1 - \nu_2)$. %
  Setting $\bar\nu = \frac{1}{2} (\nu_1 + \nu_2)$, it is easy to check
  that %
  \begin{equation*}
    \sum_{k,l} \lambda_k \lambda_l \rho(y_k, y_l)
    \;=\; \iint \rho\, \ddiff(\nu \otimes \nu)
    \;=\; 4\alpha^2 \left( \frac{1}{2}(H(\nu_1)+H(\nu_2)) - H(\bar\nu) \right)
    \;\le\; 0,
  \end{equation*}
  which proves that $\rho$ is CND. %
  Now let $\nu \in \Mset_H$ be such that $\nu(\Yset) = 0$. %
  Again, assume without loss of generality that~$\nu \neq 0$, since
  \Cref{equ:CIND} trivially holds for $\nu = 0$. %
  Then there exists $\nu_1$, $\nu_2 \in \Pset_H$, and $\alpha > 0$,
  such that $\nu = \alpha (\nu_1 - \nu_2)$. %
  We know from \Cref{lem:regular-QUF:BH-on-PH-first} that
  $\rho \in L^1(\nu_1 \otimes \nu_2)$, and therefore
  $\rho \in L^1( \left| \nu \otimes \nu \right| )$. %
  Finally, setting $\bar\nu = \frac{1}{2} (\nu_1 + \nu_2)$, we have %
  \begin{equation*}
    \iint \rho\, \ddiff(\nu \otimes \nu)
    \;=\; 4\alpha^2 \left( \frac{1}{2}(H(\nu_1)+H(\nu_2)) - H(\bar\nu) \right)
    \;\le\; 0,
  \end{equation*}
  which proves that $\rho$~is CIND.
  
  Conversely, assume that $\rho$~is CIND, and therefore CND. %
  Let $\nug$ be a random probability measures
  on~$\left( \Yset, \Pcal(\Yset) \right)$. %
  Assume without loss of generality that $H(\nub) < +\infty$, i.e.,
  $\nub \in \Pset_H$ (if not, Jensen's inequality holds trivially). %
  Then, by \Cref{lem:regular-QUF:set-PH}, we have
  $\int \lVert \psi \rVert^2\, \dnub < +\infty$. %
  Since
  $\int \lVert \psi \rVert^2\, \dnub = \Esp\bigl( \int \lVert \psi
  \rVert^2\, \dnug )$, we also have
  $\int \lVert \psi \rVert^2\, \dnug < +\infty$ almost surely, hence
  $\nug \in \Pset_H$ almost surely. %
  The integral
  $\iint \rho\, \ddiff\left[ (\nug - \nub) \otimes (\nug - \nub)
  \right]$ in the right-hand side
  of~\Cref{equ:Jensen-difference-regular-QUF} is therefore
  well-defined almost surely. %
  Using the bilinearity of~$\otimes$ and the symmetry of~$\rho$, we
  have:
  \begin{equation*}
    - \iint \rho\, \ddiff\left[ (\nug - \nub) \otimes (\nug - \nub) \right]
    \;=\; 2 \iint \rho\, \ddiff (\nug \otimes \nub) - H(\nug) - H(\nub).
  \end{equation*}
  Finally, taking expectations we obtain
  \begin{equation}
    - \Esp\left( \iint \rho\, \ddiff\left[ (\nug - \nub) \otimes (\nug - \nub) \right] \right)
    \;=\;  2 H(\nub) - \Esp\left( H(\nug) \right) - H(\nub)
    \;=\; H(\nub) - \Esp\left( H(\nug) \right),
  \end{equation}
  where we have used Fubini--Tonelli's theorem and the fact that
  $\Esp\left( \int \rho( \cdot, y' )\, \dnug \right) = \int \rho(
  \cdot, y' )\, \dnub$. %
  The left-hand side is non-negative since $\rho$ is~CIND,
  $\nug - \nub \in \Mset_H$ almost surely and
  $(\nug - \nub)(\Yset) = 0$, which concludes the proof.
\end{proof}

\subsection{Proof of \texorpdfstring{\Cref{prop:CIND-equiv-CND}}{Proposition~\ref{prop:CIND-equiv-CND}}}
\label{sec:proof-app:CIND-equiv-CND}

\begin{lemma}\label{lem:regular-QUF:set-MH}
  Let \Cref{assumpt:rho-CND} hold. %
  Let $\Mset_H$ be defined as in \Cref{sec:regular-QUFs}, i.e.,
  $\Mset_H = \vect(\Pset_H)$, with
  $\Pset_H = \left\{ \nu \in \Pset \mid \iint \rho\,
    \ddiff(\nu\otimes\nu) < +\infty \right\}$. %
  Then we have
  \begin{equation}\label{equ:MH-two-face}
    \Mset_H %
    \;=\; \bigl\{ \nu \in \Mset \bigm| \textstyle\iint \rho\; \ddiff
    |\nu\otimes\nu| < +\infty \bigr\} %
    \;=\; \bigl\{ \nu
    \in \Mset \bigm| \textstyle\int \lVert \psi \rVert^2\, \ddiff |\nu| < +\infty \bigr\},
  \end{equation}
  where $\Mset$ denotes the space of bounded signed measures on~$\Yset$.
\end{lemma}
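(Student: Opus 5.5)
To prove \Cref{lem:regular-QUF:set-MH}, I would show a cycle of inclusions between the three sets. Write them as
\[
\Mset_1 = \Bigl\{ \nu \in \Mset \Bigm| \textstyle\iint \rho\, \ddiff|\nu\otimes\nu| < +\infty \Bigr\},
\qquad
\Mset_2 = \Bigl\{ \nu \in \Mset \Bigm| \textstyle\int \lVert \psi \rVert^2\, \ddiff|\nu| < +\infty \Bigr\}.
\]
The target is $\Mset_H \subset \Mset_2$, $\Mset_2 \subset \Mset_H$, and $\Mset_1 = \Mset_2$. Throughout I use $|\nu\otimes\nu| = |\nu|\otimes|\nu|$, and the measurability of $\lVert\psi\rVert^2$ established in the proof of \Cref{lem:regular-QUF:set-PH}.

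\emph{Step 1: $\Mset_H \subset \Mset_2$.} Take $\nu = \sum_k \alpha_k \nu_k$ with $\nu_k \in \Pset_H$. Then $|\nu| \le \sum_k |\alpha_k|\, \nu_k$. By \Cref{lem:regular-QUF:set-PH}, each $\nu_k$ satisfies $\int \lVert\psi\rVert^2\, \dnu_k < +\infty$, so $\int \lVert\psi\rVert^2\, \ddiff|\nu| < +\infty$.

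\emph{Step 2: $\Mset_2 \subset \Mset_H$.} Given $\nu \in \Mset_2$, write the Jordan decomposition $\nu = \nu^+ - \nu^-$. If $\nu^\pm \ne 0$, normalize to probability measures $\nu^\pm/\nu^\pm(\Yset)$. These satisfy $\int \lVert\psi\rVert^2 < +\infty$ because $\nu^\pm \le |\nu|$, so they lie in $\Pset_H$ by \Cref{lem:regular-QUF:set-PH}. Hence $\nu$ is a linear combination of elements of $\Pset_H$.

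\emph{Step 3: $\Mset_2 \subset \Mset_1$.} Use $\rho(y,y') \le 2\bigl(\lVert\psi(y)\rVert^2 + \lVert\psi(y')\rVert^2\bigr)$ to get
\[
\iint \rho\, \ddiff(|\nu|\otimes|\nu|) \le 4\, |\nu|(\Yset) \int \lVert\psi\rVert^2\, \ddiff|\nu| < +\infty .
\]

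\emph{Step 4: $\Mset_1 \subset \Mset_2$.} This mirrors the converse half of \Cref{lem:regular-QUF:set-PH}. If $\nu = 0$ there is nothing to prove. Otherwise Tonelli gives $\int \rho(y,y_0)\, |\nu|(\dy) < +\infty$ for $|\nu|$-almost every $y_0$, and such a $y_0$ exists because $|\nu| \ne 0$. Then
\[
\int \lVert\psi\rVert^2\, \ddiff|\nu| \le 2\int \rho(\cdot,y_0)\, \ddiff|\nu| + 2\,\lVert\psi(y_0)\rVert^2\, |\nu|(\Yset) < +\infty .
\]

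I don't expect a real obstacle. The only points needing care are the identity $|\nu\otimes\nu| = |\nu|\otimes|\nu|$, which is standard for product measures via Hahn decompositions, and the treatment of the degenerate cases where $\nu$ or one of $\nu^\pm$ is zero.
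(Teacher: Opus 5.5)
Your proposal is correct and takes essentially the same route as the paper: the Jordan decomposition plus \Cref{lem:regular-QUF:set-PH} gives $\Mset_H = \Mset_2$, and the equivalence of \Cref{lem:regular-QUF:set-PH}, applied to $|\nu|$ via $|\nu\otimes\nu| = |\nu|\otimes|\nu|$, gives $\Mset_1 = \Mset_2$. The differences are only cosmetic: you bound $|\nu| \le \sum_k |\alpha_k|\,\nu_k$ directly instead of first writing $\nu = \alpha\nu_1 - \beta\nu_2$ (a step that uses convexity of $\Pset_H$), and you redo the argument of \Cref{lem:regular-QUF:set-PH} for the unnormalized measure $|\nu|$ instead of extending that lemma by scaling.
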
  

\begin{proof}
  By \Cref{lem:regular-QUF:set-PH} we have the equivalence
  \begin{equation}\label{equ:equiv-lemma1}
    \iint \rho\, \ddiff(\nu\otimes\nu) < +\infty
    \quad\Leftrightarrow\quad
    \int \lVert \psi \rVert^2\, \dnu < +\infty
  \end{equation}
  for all $\nu \in \Pset$, which extends immediately to non-negative
  bounded measures.

  Let $\nu \in \Mset_H = \vect(\Pset_H)$. %
  Then there exists $\nu_1$, $\nu_2 \in \Pset_H$ and $\alpha$,
  $\beta \ge 0$ such that $\nu = \alpha \nu_1 - \beta \nu_2$. By
  minimality of the Hahn-Jordan decomposition, it follows that
  \begin{equation*}
    \int \lVert \psi \rVert^2\, \ddiff |\nu|
    \;=\; \int \lVert \psi \rVert^2\, \ddiff \nu^+ + \int \lVert \psi \rVert^2\, \ddiff \nu^-
    \;\le\; \alpha\, \int \lVert \psi \rVert^2\, \ddiff \nu_1 + \beta \int \lVert \psi \rVert^2\, \ddiff \nu_2
    \;<\; +\infty.
  \end{equation*}
  Conversely, any $\nu \in \Mset$ such that
  $\int \lVert \psi \rVert^2\, \ddiff |\nu| < +\infty$ can be written
  using the Hahn-Jordan decomposition as
  $\nu = \alpha \nu_1 - \beta \nu_2$, where $\nu_1$ and~$\nu_2$ are
  probability measures such that
  $\int \lVert \psi \rVert^2\, \ddiff \nu_j <+\infty$,
  $j \in \{ 1, 2 \}$, %
  which implies that $\nu_1$, $\nu_2 \in \Pset_H$, and therefore
  $\nu \in \vect(\Pset_H)$.

  So far we have proved that
  $\Mset_H = \bigl\{ \nu \in \Mset \bigm| \textstyle\int \lVert \psi
  \rVert^2\, \ddiff |\nu| < +\infty \bigr\}$. %
  The other part of \Cref{equ:MH-two-face} then follows from
  \Cref{equ:equiv-lemma1}, using the fact that
  $\left| \nu \otimes \nu \right| = |\nu| \otimes |\nu|$.
\end{proof}

\begin{lemma}\label{lem:regular-QUF-embedding}
  Let \Cref{assumpt:rho-CND} hold, and let $\nu \in \Mset_H$. %
  Then $\psi$ is Pettis-integrable with respect to~$\nu$, i.e., there
  exists an element of~$\Hset$, denoted by~$\int\psi\, \dnu$, such that
  $\left< h,\, \int\psi\, \dnu \right> = \int \left< h,\, \psi(y)
  \right> \nu(\dy)$ for all~$h \in \Hset$.
\end{lemma}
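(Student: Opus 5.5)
The plan is to construct $\int\psi\,\dnu$ via the Riesz representation theorem, applied to the linear functional $\ell(h) = \int \left< h,\, \psi(y) \right> \nu(\dy)$ on~$\Hset$. Both the definition of $\ell$ and its continuity rest on the integrability of $\lVert\psi\rVert$ with respect to~$|\nu|$.

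First, I will check that $\ell$ is well defined. By \Cref{lem:regular-QUF:weak-meas}, the map $y \mapsto \left< h,\, \psi(y) \right>$ is measurable for every $h \in \Hset$. By the preliminary remark in the proof of \Cref{lem:regular-QUF:set-PH}, the map $y \mapsto \lVert \psi(y) \rVert$ is also measurable. \Cref{lem:regular-QUF:set-MH} gives $\int \lVert \psi \rVert^2\, \ddiff|\nu| < +\infty$ for $\nu \in \Mset_H$. Since $|\nu|$ is a finite measure, the Cauchy--Schwarz inequality yields
\begin{equation*}
  \int \lVert \psi \rVert\, \ddiff|\nu|
  \;\le\; |\nu|(\Yset)^{1/2} \left( \int \lVert \psi \rVert^2\, \ddiff|\nu| \right)^{1/2}
  \;<\; +\infty.
\end{equation*}
Hence, for each $h$, the bound $\left| \left< h,\, \psi(y) \right> \right| \le \lVert h \rVert\, \lVert \psi(y) \rVert$ shows that $\left< h,\, \psi \right> \in L^1(|\nu|)$, so $\ell(h)$ is a well-defined real number.

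Next, $\ell$ is clearly linear in~$h$, by linearity of the inner product and of the integral. It is bounded, since $|\ell(h)| \le \lVert h \rVert \int \lVert \psi \rVert\, \ddiff|\nu|$. The Riesz representation theorem then provides a unique element $m_\nu \in \Hset$ such that $\left< h,\, m_\nu \right> = \ell(h)$ for all $h \in \Hset$. This $m_\nu$ is the Pettis integral $\int \psi\, \dnu$.

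The only point needing care is the measurability of $\lVert \psi \rVert$. The sup formula used in the remark ranges over an uncountable set, so I would justify it via weak measurability applied to $\psi - \psi(y_0)$. Alternatively, one can restrict to the closed span $\Hset_0$ of $\{\psi(y) - \psi(y_0)\}$, which is separable when the span of the $\psi$'s is. If separability fails, I would avoid needing measurability of $\lVert\psi\rVert$ itself. The plan is to bound $\left| \left< h,\, \psi(y) \right> \right|$ by $\lVert h \rVert\, \bigl( \rho(y, y_0)^{1/2} + \lVert \psi(y_0) \rVert \bigr)$, which is measurable since $\rho$ is. The same Cauchy--Schwarz argument then applies, using $\int \rho(\cdot, y_0)\, \ddiff|\nu| < +\infty$ for a suitable $y_0$, which follows as in the proof of \Cref{lem:regular-QUF:set-PH}. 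This alternative route makes the argument robust.
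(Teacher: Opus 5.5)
Your proposal is correct and follows the paper's proof: you bound $\lvert\langle h,\psi(y)\rangle\rvert \le \lVert h\rVert\,\lVert\psi(y)\rVert$, apply Cauchy--Schwarz against the finite measure $|\nu|$ using $\int\lVert\psi\rVert^2\,\ddiff|\nu|<+\infty$ from \Cref{lem:regular-QUF:set-MH}, and conclude with the Riesz representation theorem (the paper writes $\lVert\psi(y)\rVert^2 = k(y,y)$ with $k$ measurable). Your measurability worry has a simpler fix than your fallback routes: $\lVert\psi(y)\rVert^2 = \rho(y,y_0) + 2\langle\psi(y_0),\psi(y)\rangle - \lVert\psi(y_0)\rVert^2$ is measurable by \Cref{lem:regular-QUF:weak-meas}, with no separability needed.
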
  

\begin{proof}
  For all $h \in \Hset$ and $y \in \Yset$,
  $y \mapsto \left< h,\, \psi(y) \right>$ is measurable by
  \Cref{lem:regular-QUF:weak-meas}, and
  $\left| \left< h,\, \psi(y) \right> \right| \le \lVert h \rVert\,
  \lVert \psi(y) \rVert = \lVert h \rVert\, \sqrt{k(y,y)}$. %
  Therefore, for all $\nu \in \Mset_H$ and all $h \in \Hset$, it holds
  that
  \begin{align*}
    \int \left| \left< h,\, \psi(y) \right> \right| \ddiff |\nu|
    & \;\le\;
      \lVert h \rVert\, \int \sqrt{k(y,y)}\, |\nu|(\dy)\\
    & \;\le\;
      \lVert h \rVert\, \left( |\nu|(\Yset) \right)^{\frac{1}{2}}
      \left( \int k(y,y)\, |\nu|(\dy) \right)^{\frac{1}{2}}
      \;<\; +\infty.
  \end{align*}
  As a consequence, the linear functional $\Hset \to \Rset$,
  $h \mapsto \int \left< h,\, \psi(y) \right>\, \nu(\dy)$ is
  well-defined and continuous, and the existence of the Pettis
  integral~$\int \psi\, \dnu$ follows from the Riesz representation
  theorem.
\end{proof}

\begin{remark}\label{rem:kernel-mean-embedding}
  If $\Hset$ is an RKHS and $\psi(y) = k(y, \cdot)$ as in
  \Cref{rem:RKHS-feat-map}, then for all~$\nu \in \Mset_H$, we have
  $\int k(y,y)\, \nu(\dy) = \int \lVert \psi \rVert^2\, \dnu
  <+\infty$, and the Pettis-integral $\int\psi\, \dnu$ coincides with
  the kernel mean embedding $\int k(\cdot, y)\, \nu(\dy)$, where the
  integral is now defined in the Lebesgue sense.
\end{remark}

\begin{proof}[Proof of \Cref{prop:CIND-equiv-CND}]
  Let $\rho: \Yset \times \Yset \to \Rplus$ be symmetric, measurable
  and vanishing on the diagonal. %
  If $\rho$ is CIND, then it is also CND, since finitely supported
  measures belong to~$\Mset_H$. %
  Conversely, assume that $\rho$ is CND. %
  Let $\Hset$ and~$\psi$ be as in \Cref{thm:CND-kernel}. %
  Then $\int \psi\, \dnu$ exists in the Pettis sense by
  \Cref{lem:regular-QUF-embedding}, for all $\nu \in \Mset_H$. %
  Assume in addition that $\nu(\Yset) = 0$. %
  Then we have
  \begin{align*}
    \iint \rho\, \ddiff (\nu \otimes \nu)
    & \;=\; \iint \lVert \psi(y) - \psi(y') \rVert^2\, \nu(\dy)\, \nu(\dy')\\
    & \;=\; \iint \left[ \lVert \psi(y) \rVert^2 + \lVert \psi(y') \rVert^2 - 2 \left< \psi(y),\, \psi(y') \right> \right] \nu(\dy)\, \nu(\dy')\\
    & \;=\; \int \left[ 0 + \int \lVert \psi(y') \rVert^2 \nu(\dy') - 2 \left< \psi(y),\, \textstyle\int \psi\, \dnu \right> \right] \nu(\dy)\\
    & \;=\; - 2\, \bigl\Vert \textstyle\int \psi\, \dnu \bigr\Vert^2 \;\le\; 0,
  \end{align*}
  which proves that $\rho$~is CIND.
\end{proof}

\subsection{Proof of \texorpdfstring{\Cref{thm:min-finite-case}}{Theorem~\ref{thm:min-finite-case}}}
\label{sec:proof-app:min-finite-case}

For all~$m \in \Nset^*$, we denote by $\Sigma_{m-1}$ the
$(m-1)$-dimensional probability simplex in~$\Rset^m$, %
that is,
$\nu_{m-1}=\{\nu \in \Rset^m\mid \sum_{i=1}^m \nu_i = 1,\, \nu_i\geq
0,\text{ for } i=1,\ldots,m\}$. %
Note that the letter~$\nu$, which denotes elsewhere in the paper a
probability measure, is used in this section to denote an element of
the probability simplex. %
The relative interior of~$\Sigma_{m-1}$, denoted by
$\ri(\Sigma_{m-1})$, consists of all probability vectors with strictly
positive components, namely,
$\ri(\Sigma_{m-1})=\{\nu \in \Rset^m\mid \sum_{i=1}^m \nu_i = 1,\,
\nu_i> 0,\text{ for } i=1,\ldots,m\}$.%

\begin{lemma}\label{lem:concave}
  Let $m \in \Nset^*$. %
  Let $H: \Sigma_{m-1} \to \Rpbar$ denote a concave function on the
  probability simplex, taking non-negative extended-real values. %
  Then there exists a set $\Phi \subset \Rpbar^m$ such that %
  i) for all $\nu \in \Sigma_{m-1}$,
  \begin{equation}\label{equ:lem-concave}
    H(\nu) \;\le\; \inf_{\varphi \in \Phi} \left< \varphi,\, \nu \right>,
  \end{equation}
  and ii) for all $\nu \in \ri(\Sigma_{m-1})$, the infimum is attained
  and the inequality is an equality.
\end{lemma}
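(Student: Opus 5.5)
The plan is to build $\Phi$ from the supporting affine functions of $H$ at points of the relative interior, rewritten as linear functionals on the simplex. We can add $+\infty$ components to handle the boundary.

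First dispose of trivial cases. If $H(\nu_0)=+\infty$ for some $\nu_0\in\ri(\Sigma_{m-1})$, then concavity forces $H=+\infty$ on all of $\ri(\Sigma_{m-1})$. Indeed, any $\nu\in\ri(\Sigma_{m-1})$ can be written as $\nu=\lambda\nu_0+(1-\lambda)\nu'$ with $\nu'\in\Sigma_{m-1}$ and $\lambda\in(0,1)$, and $H\ge0$ gives $\dom(H)=\Sigma_{m-1}$. In that case take $\Phi=\{(+\infty,\dots,+\infty)\}$. Then $\langle\varphi,\nu\rangle=+\infty$ for every $\nu$, with the convention $0\cdot(+\infty)=0$ and at least one positive coordinate, so i) and ii) hold. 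The case $m=1$ is equally trivial.

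Otherwise $H$ is finite and concave on the relatively open convex set $\ri(\Sigma_{m-1})$. Fix $\nu_0\in\ri(\Sigma_{m-1})$. By the classical supergradient theorem for finite concave functions on relatively open convex sets in $\Rset^m$, there is an affine majorant $a$ of $H$ on $\ri(\Sigma_{m-1})$, defined on the affine hull, with $a(\nu_0)=H(\nu_0)$. On the affine hull $\{\sum\nu_i=1\}$ every affine function is linear: $a(\nu)=\langle\varphi,\nu\rangle$ with $\varphi_i=a(e_i)$. The components $\varphi_i$ are finite, but they may be negative.

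Next extend the majorization to the boundary. For $\nu$ on the relative boundary and $\nu_1\in\ri(\Sigma_{m-1})$, we have $\nu_t=(1-t)\nu+t\nu_1\in\ri(\Sigma_{m-1})$ for $t\in(0,1]$. Concavity and $H\ge0$ give
\[
(1-t)H(\nu)\le H(\nu_t)\le\langle\varphi,\nu_t\rangle,
\]
and letting $t\to0$ yields $H(\nu)\le\langle\varphi,\nu\rangle$. In particular $H(\nu)<+\infty$ on the boundary, and since $H\ge0$ at the vertices $e_i$, we get $\varphi_i=\langle\varphi,e_i\rangle\ge H(e_i)\ge0$. So $\varphi\in\Rplus^m\subset\Rpbar^m$.

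Setting $\Phi=\{\varphi_{\nu_0}:\nu_0\in\ri(\Sigma_{m-1})\}$ then gives i) on all of $\Sigma_{m-1}$, and ii) with the infimum attained at $\varphi_{\nu_0}$ when $\nu=\nu_0$.

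There is a subtlety in the step above. If $H(\nu)=+\infty$ at some boundary point while $H$ is finite in the interior, concavity with $\lambda\to$ small would give $H(\nu_t)\ge(1-t)H(\nu)=+\infty$, contradicting finiteness in the interior. So that case cannot occur, and the argument is consistent.

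The main obstacle is the existence of the supergradient at interior points. This requires working in the relative topology of the affine hull, using that a finite concave function on a relatively open convex set in finite dimension has a nonempty superdifferential, a standard result of Rockafellar (Theorem 23.4). The second delicate point is the sign of $\varphi$, which the vertex argument handles. Without it, non-negativity of the entries of $\Phi$, required since $\Phi\subset\Rpbar^m$, would fail.
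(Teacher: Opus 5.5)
Your proof is correct and follows essentially the same route as the paper: a trivial case with $\Phi=\{+\infty\,\un\}$, supporting affine majorants at relative-interior points rewritten as linear forms $\langle \alpha+\beta\un,\nu\rangle$, and non-negativity of the coefficients read off at the vertices $e_j$. The only difference is in how the boundary is handled. The paper splits on whether $H$ is infinite anywhere and then applies the supergradient fact with the whole simplex as domain. You split on the relative interior and recover finiteness and majorization on the boundary through your limiting argument $(1-t)H(\nu)\le\langle\varphi,\nu_t\rangle$, which makes explicit a step the paper simply calls well known.
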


\begin{proof}
  Assume first that $H(\nu) = +\infty$ for at least one
  $\nu \in \Sigma_{m-1}$. %
  Then, by concavity, $H(\nu) = +\infty$ for all
  $\nu \in \ri(\Sigma_{m-1})$. %
  As a consequence, the claim holds with
  $\Phi = \{ +\infty\, \un \}$, with $\un$ 
  denoting the constant function over the 
  simplex taking the value $1$.

  Assume now that $H(\nu) < +\infty$ for all $\nu \in \Sigma_{m-1}$. %
  It is always true that
  \begin{equation}\label{equ:lem-concave:proof:1}
    H(\nu) \;\le\; \inf_a a(\nu),
  \end{equation}
  where the infimum runs over all affine majorants of~$H$. %
  Moreover, it is a well-known fact from convex analysis that, since
  $H$~is concave, %
  then on the relative interior~$\ri(\Sigma_{m-1})$
  the function is continuous, the
  inequality~\eqref{equ:lem-concave:proof:1} is an equality, and the
  infimum in the right-hand side is attained. %
  To conclude the proof, observe that for
  $a: \nu \mapsto \left< \alpha,\, \nu \right> + \beta$ we have
  $a(\nu) = \left< \alpha + \beta\, \un,\, \nu \right>$ for
  all~$\nu \in \Sigma_{m-1}$, %
  and the components
  of~$\alpha + \beta\, \un$ are non-negative since
  $\alpha_j + \beta = \left< \alpha + \beta\, \un,\, e_j \right> \ge
  H(e_j) \ge 0$ for all $j \in \{ 1, \ldots, m \}$, where $e_j$ is the
  $j$-th element of the canonical basis of~$\Rset^m$ %
  (corresponding to a Dirac measure on the $j$-th element of~$\Yset$).%
\end{proof}

\begin{proof}[Proof of \Cref{thm:min-finite-case}]
  Let $m \in \Nset^*$ denote the cardinal of~$\Yset$, %
  and let $y_1$, \ldots, $y_m$ denote the elements of~$\Yset$. %
  Assume without loss of generality
  that~$\Ycal = \Pcal(\Yset)$---otherwise, we simply need to work with
  the atoms of~$\Ycal$ instead of~$\Yset$ itself. %
  Identify $\Pset$ with the $(m-1)$-dimensional probability simplex
  in~$\Sigma_{m-1} \subset \Rset^m$, and $H$ with a non-negative,
  concave function on~$\Sigma_{m-1}$. %
  The integral $\int L(y,d)\, \nu(\dy)$ in the right-hand side
  of~\eqref{equ:expr-H-expected-loss} becomes, through this
  identification, the (extended) scalar product
  $\left< \varphi_d,\, \nu \right> = \sum_{k=1}^m \varphi_{d,k}\,
  \nu_k$, where
  $\varphi_d = \left( L(y_k, d) \right)_{1 \le k \le m} \in \Rset^m$
  and the convention $(+\infty) \cdot 0 = 0$ is used when
  $\varphi_{d,k} = +\infty$ and~$\nu_k = 0$. %
  The claim to be proved can thus be reformulated as
  \begin{align*}
    (P_m): \quad
    & \text{%
      for all non-negative, concave functions $H: \Sigma_{m-1} \to \Rpbar$,
      there exists a set}\\[-3pt]
    & \text{%
      $\Phi \subset \Rpbar^m$ such that, for all $\nu \in \Sigma_{m-1}$,
      $H(\nu) = \min_{\varphi \in \Phi} \left< \varphi,\, \nu \right>$.}
  \end{align*}

  We prove the result by induction on~$m \in \Nset^*$. %
  For $m = 1$, $\Sigma_{m-1} = \Sigma_0 = \{ 1 \}$, and therefore
  $P_1$ trivially holds with $\Phi = \{ H(1) \}$. %
  Assume now that $P_m$ holds for some~$m \in \Nset^*$. %
  Let $H: \Sigma_m \to \Rpbar$ be concave. %
  By \Cref{lem:concave}, there exists $\Phi_0 \subset \Rpbar^{m+1}$
  such that
    \begin{equation}
    H(\nu) \;\le\; \inf_{\varphi \in \Phi_0} \left< \varphi,\, \nu \right>,
  \end{equation}
  for all $\nu \in \Sigma_m$, with equality and the infimum attained on~$\ri(\Sigma_m)$. %
  Let $F_1$, \ldots, $F_{m+1}$ denote the $(m-1)$-dimensional faces
  of~$\Sigma_m$, where
  $F_k = \left\{ \nu \in \Sigma_m \mid \nu_k = 0 \right\}$. %
  Then $\Sigma_m \setminus \ri(\Sigma_m) = \cup_{k=1}^{m+1} F_k$, and
  each $F_k$~is linearly isomorphic to~$\Sigma_{m-1}$. %
  More precisely, we have $F_k = A_k \Sigma_{m-1}$, where
  $A_k \in \Rset^{(m+1) \times m}$ is the matrix with entries
  in~$\{ 0, 1 \}$ such that $\nu \mapsto A_k \nu$ inserts a zero in
  the $k$-th position, and $\nu \mapsto A_k^\top \nu$ is the inverse
  mapping. %
  Thus, for each $k \in \{ 1, \ldots, m+1 \}$,
  $H_k^0: \nu \mapsto H(A_k \nu)$ is a concave function
  on~$\Sigma_{m-1}$, %
  and therefore by~$P_m$ there exists a set
  $\Phi_k^0 \subset \Rpbar^m$ such that
  $H_k^0(\nu) = \min_{\varphi \in \Phi_k^0} \left< \varphi,\, \nu
  \right>$, for all $\nu \in \Sigma_{m-1}$. %
  As a consequence, for all $\nu \in F_k$ we have
  \begin{equation}
    H(\nu)
    \;=\; H_k^0\left( A_k^\top \nu \right)
    \;=\; \min_{\varphi \in \Phi_k^0} \bigl< \varphi,\, A_k^\top \nu \bigr>
    \;=\; \min_{\varphi \in \Phi_k^0} \bigl< A_k \varphi,\, \nu \bigr>.
  \end{equation}
  Set
  $\Phi_k = \bigl\{ \varphi \in \Rpbar^{m+1} \bigm| \exists \varphi^0
  \in \Phi_k^0,\; \varphi = A_k \varphi^0 + (+\infty)\, e_k \bigr\}$,
  where $e_k$ is the $k$-th element of the canonical basis. %
  Then, for all $\nu \in \Sigma_m$, we have
  \begin{equation*}
    \min_{\varphi \in \Phi_k} \left< \varphi,\, \nu \right>
    \;=\;
    \begin{cases}
      H(\nu) & \text{if $\nu \in F_k$,}\\
      +\infty & \text{otherwise.}
    \end{cases}
  \end{equation*}
  Setting $\Phi = \Phi_0 \cup \Phi_1 \cup \cdots \cup \Phi_{m+1}$
  proves $P_{m+1}$. %
  The infimum is attained for some~$\varphi \in \Phi_0$ for
  $\nu \in \ri(\Sigma_m)$, and for some~$\varphi \in \Phi_k$ is
  $\nu \in F_k \subset \Sigma_m \setminus(\ri(\Sigma_m))$.
\end{proof}

\end{appendices}

\bibliography{bect-zhu-moda14-paper.bib}

\end{document}